\newtheorem{Th}{Theorem}[section]
\newtheorem{Prop}[Th]{Proposition}
\newtheorem{Lem}[Th]{Lemma}
\newtheorem{Cor}[Th]{Corollary}
\theoremstyle{remark}
\newtheorem{Rem}[Th]{Remark}
\theoremstyle{definition}
\newcommand{\vp}{\varphi}
\newcommand{\eps}{\varepsilon}
\def\N{\mathbb{N}}
\def\R{\mathbb{R}}
\newcommand{\cC}{{\mathcal C}}
\newcommand{\cD}{{\mathcal D}}
\newcommand{\cG}{{\mathcal G}}
\newcommand{\cH}{{\mathcal H}}
\newcommand{\cM}{{\mathcal M}}
\newcommand{\cO}{{\mathcal O}}
\newcommand{\cS}{{\mathcal S}}
\newcommand{\cX}{{\mathcal X}}
\newcommand{\la}{\lambda}
\newcommand{\De}{\Delta}
\newcommand{\weakto}{\rightharpoonup}
\numberwithin{equation}{section}
\begin{document}
	\title{Normalized solutions to Schr\"odinger equations in the strongly sublinear regime}
\author[J. Mederski]{Jaros\l aw Mederski}
\author[J. Schino]{Jacopo Schino}

%\address[J. Mederski]{\newline\indent
%	{\newline\indent 
%		Institute of Mathematics,
%		\newline\indent 
%		Polish Academy of Sciences,
%		\newline\indent 
%		ul. \'Sniadeckich 8, 00-656 Warsaw, Poland}
%}
%\email{\href{mailto:jmederski@impan.pl}{jmederski@impan.pl}}
\address[J. Mederski]{\newline\indent
	Institute of Mathematics,
	\newline\indent 
	Polish Academy of Sciences,
	\newline\indent 
	ul. \'Sniadeckich 8, 00-656 Warsaw, Poland
}
\email{\href{mailto:jmederski@impan.pl}{jmederski@impan.pl}}

\address[J. Schino]{\newline\indent
	Department of Mathematics
	\newline\indent 
	North Carolina State University
	\newline\indent 
	2311 Stinson Drive, 27607 Raleigh, NC, USA
	\newline\indent
	and
	\newline\indent
	Faculty of Mathematics, Informatics and Mechanics
	\newline\indent
	University of Warsaw
	\newline\indent
	ul. Banacha 2, 02-097, Warsaw, Poland
}
\email{\href{mailto:j.schino2@uw.edu.pl}{j.schino2@uw.edu.pl}}

\maketitle

\begin{abstract}
We look for solutions to the Schr\"odinger equation
\[
-\Delta u + \lambda u = g(u) \quad \text{in } \R^N
\]
coupled with the mass constraint $\int_{\R^N}|u|^2\,dx = \rho^2$, with $N\ge2$. The behaviour of $g$ at the origin is allowed to be strongly sublinear, i.e., $\lim_{s\to0}g(s)/s = -\infty$, which includes the case
\[
g(s) = \alpha s \ln s^2 + \mu |s|^{p-2} s
\]
with $\alpha > 0$ and $\mu \in \R$, $2 < p \le 2^*$ properly chosen. We consider a family of approximating problems that can be set in $H^1(\R^N)$ and the corresponding least-energy solutions, then we prove that such a family of solutions converges to a least-energy one to the original problem. Additionally, under certain assumptions about $g$ that allow us to work in a suitable subspace of $H^1(\R^N)$, we prove the existence of infinitely many solutions.
\end{abstract}

{\bf MSC 2010:} 35J10, 35J20, 35J61, 58E05

%{\bf Key words:} Nonlinear scalar field equations, logarithmic Sobolev inequality, cubic-quintic effect, critical point theory, nonradial solutions, concentration compactness, Lions' lemma, Pohozaev manifold, zero mass case, infinite mas case.
%

\section*{Introduction}
\setcounter{section}{1}

We look for solutions $(u,\lambda) \in H^1(\R^N) \times \R$ to the following equation
\begin{equation}\label{eq}
-\Delta u + \lambda u = g(u) \quad \mathrm{in} \ \R^N, \ N \geq 2
\end{equation}
paired with the constraint
\begin{equation}\label{eq:mass}
\int_{\R^N} |u|^2 \, dx =\rho^2,
\end{equation}
where $\rho>0$ is fixed and $g$ satisfies suitable assumptions (see (g0)--(g4) below). Equations like \eqref{eq} arise when one looks for standing-wave solutions to the evolution equation
\begin{equation}\label{eq:time}
\mathrm{i} \partial_t \Psi + \Delta \Psi + g(\Psi) = 0,
\end{equation}
i.e., $\Psi(t,x) = e^{\mathrm{i} \lambda t} u(x)$. The condition \eqref{eq:mass}, instead, is motivated by the property that, formally, the quantity
\[
\int_{\R^N} |\Psi(t,x)|^2 \, dx
\]
is conserved in time. Moreover, the $L^2$-norm squared of a solution to \eqref{eq:time} has a precise physical meaning in the contexts which this equation is derived from: it represents, e.g., the power supply in nonlinear optics and the total number of atoms in the Bose--Einstein condensation.
The model for $g$ that inspires this paper is
\begin{equation}\label{eq:BBM}
g(s) = s \ln s^2,
\end{equation}
which was introduced in \cite{Bialynicki} to obtain the {\em separability of non-interacting subsystems}; observe that, in this case,
\begin{equation*}
\lim_{s \to 0} \frac{g(s)}{s} = -\infty,
\end{equation*}
which is known in the literature as the {\it strongly sublinear} (or {\it infinite-mass}) regime. After that, both \eqref{eq} -- possibly with an external potential -- and \eqref{eq:time}, in both cases with $g$ as in \eqref{eq:BBM}, raised great interest: we mention \cite{dAvenia,SqSz,TZ,WangZhang,WangZhangJMPA} for the former context and \cite{CarlesGallagher,CazeLog,CazHar,GLN} (see also \cite[Section 9.3]{Cazenave:book}) for the latter one. The common feature of most of the articles above is that the authors can work in the subspace of $H^1(\R^N)$
\begin{equation*}%\label{eq:H1log}
\cH := \left\{u \in H^1(\R^N) : \int_{\R^N} u^2 |\ln u^2| \, dx < \infty\right\},
\end{equation*}
where the energy functional (see \eqref{eq:action} below) is of class $\cC^1$ (we mention that, instead, the approach of \cite{dAvenia} is based on critical point theory for lower semi-continuous functionals, while that of \cite{GLN} on a truncation argument). Of course, this strategy is impossible with generic nonlinearities as in this paper; speaking of which, generic nonlinear terms were recently considered in \cite{ITWZ,MederskiBL2}.

Concerning normalized solutions, i.e., in the presence of \eqref{eq:mass}, to the best of our knowledge, they are only treated in two very recent pieces of work: in \cite{Shuai_Yang}, with
\begin{equation}\label{eq:log+p}
g(s) = \alpha s \ln s^2 + \mu |s|^{p-2} s,
\end{equation}
%\[
%\frac{\alpha}{2} s^2 \left(\ln s^2 - 1\right) + \frac{\mu}{p} |s|^p
%\]
where $\alpha,\mu \in \R$ and $2 < p \le 2^*$, and in \cite{ZhangZou}, which mainly concerns systems of two equations. It is worth mentioning that, when $g$ is as in \eqref{eq:BBM}, if $(v,0)$ solves \eqref{eq} and $v \ne 0$, then $(u,\lambda)$ given by
\[
u = \frac{\rho}{|v|_2} v, \quad \lambda = 2 \ln\left(\frac{\rho}{|v|_2}\right)
\]
is a solution to \eqref{eq}--\eqref{eq:mass}. Nonetheless, the approach of \cite{Shuai_Yang,ZhangZou} still makes use of $\cH$. In particular, to recover compactness, the authors consider the subspace of radially symmetric functions as in \cite{CazeLog,CazHar}, which embeds compactly into $L^q(\R^N)$ for every $q \in [2,2^*)$; this is different from the classical result concerning the subspace of radial functions in $H^1(\R^N)$, which embeds compactly into $L^q(\R^N)$ only for $q \in (2,2^*)$. The additional compact embedding into $L^2(\R^N)$ proves to be extremely useful owing to the constraint \eqref{eq:mass}; in particular, this is what allows the authors to work with every $\rho>0$ when $2 < p < 2 + 4/N$, where $p$ is the same as in \eqref{eq:log+p}. Once again, here we cannot utilize the compact embedding into $L^2(\R^N)$ because $g$ need not be given by \eqref{eq:log+p}.

The function $g$ in \eqref{eq} satisfies the following assumptions. Define $G(s)=\int_0^s g(t)\, dt$ and
\begin{equation}\label{eq:G+}
G_+(s) =
\begin{cases}
\displaystyle \int_0^s \max\{g(t),0\} \, dt & \text{if } s \ge 0,\\
\displaystyle \int_{s}^0 \max\{-g(t),0\} \, dt & \text{if } s < 0.
\end{cases}
\end{equation}
\begin{itemize}
	\item[(g0)] $g\colon\R \to \R$ is continuous and $g(0)=0$.
	\item[(g1)] $\lim_{s\to 0}G_+(s)/|s|^2=0$ and $\limsup_{s \to 0} g_+(s)/s < \infty$, where $g_+ = G_+'$.
	\item[(g2)] If $N\ge3$, then $\limsup_{|s|\to \infty}|g(s)|/|s|^{2^*-1}<\infty$, where $2^*=\frac{2N}{N-2}$.\\
	If $N=2$, then $\lim_{|s|\to \infty}g(s)/e^{\alpha s^2} = 0$ for all $\alpha > 4\pi$.
	\item[(g3)] $\lim_{|s|\to \infty} G_+(s) / |s|^{2+4/N}=0$.
	\item[(g4)] There exists  $\xi_0 \ne 0$ such that $G(\xi_0)>0$.
\end{itemize}
The assumptions (g0), (g2), and (g4) are classical in the context of elliptic problems (cf. \cite{BerLions,BerLionsII}), while (g1) is a weaker version of what is normally assumed in the presence of $L^2$-constraints, i.e., $\lim_{s\to 0} g(s) / s = 0$; (g3) indicates that the growth of $G_+$ is mass-subcritical at infinity. With these assumptions, we can deal with nonlinearities that are much more general than \eqref{eq:log+p}; for example, we can consider functions $g$ that behave like $-|s|^{\omega - 1} s$ or $|s|^{\omega - 1} s \ln s^2$ for $|s|$ small, with $0 < \omega < 1$. We can also cover the case when $g$ goes to $0$ more slowly than any power, e.g., when $g$ behaves like $1 / \ln s^2$ for $|s|$ small.

Let us define
\[
\cD := \left\{ u \in H^1(\R^N) : \int_{\R^N} |u|^2 \, dx \le \rho^2 \right\} \quad \text{and} \quad \cS := \left\{ u \in H^1(\R^N) : \int_{\R^N} |u|^2 \, dx = \rho^2 \right\}.
\]
Sometimes, we will write explicitly $\cD(\rho)$ and $\cS(\rho)$. Let also $J \colon H^1(\R^N) \to \R \cup \{\infty\}$ be the energy functional associated with \eqref{eq}--\eqref{eq:mass}, i.e.,
\begin{equation}\label{eq:action}
J(u) = \frac12 \int_{\R^N} |\nabla u|^2 \, dx - \int_{\R^N} G(u) \, dx.
\end{equation}

Our first result is the existence of a solution to \eqref{eq}--\eqref{eq:mass}, with additional properties. We point out that by \emph{solution} to \eqref{eq} we mean a pair $(u,\lambda) \in H^1(\R^N) \times \R$ such that $J'(u)v = -\lambda uv$ for every $v \in \cC^{\infty}_0(\R^N)$.
\begin{Th}\label{th:mainsub}
If $g$ satisfies (g0)--(g4), then there exists $\bar{\rho}>0$ such that for every $\rho>\bar{\rho}$ there exist $\lambda>0$ and $u\in\cS$ such that $J(u)=\min_{\cD}J<0$ and $(u,\lambda)$ is a solution to \eqref{eq}--\eqref{eq:mass}. Moreover, $u$ has constant sign and is, up to a translation, radial and radially monotonic.
\end{Th}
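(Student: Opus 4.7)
The plan is to carry out the approximation strategy outlined in the abstract. For each $n \in \N$, I introduce a modified nonlinearity $g_n$ that coincides with $g$ for $|s| \ge 1/n$ and is smoothly cut off on $[-1/n, 1/n]$ with $g_n(0) = 0$, so that $\sup_{s\ne 0}|g_n(s)/s| < \infty$. Then
\[
J_n(u) = \frac{1}{2}\int_{\R^N} |\nabla u|^2 \, dx - \int_{\R^N} G_n(u) \, dx
\]
is of class $\cC^1$ on all of $H^1(\R^N)$, $G_n \to G$ and $g_n \to g$ pointwise, and $g_n$ still satisfies (g0)--(g4) with constants uniform in $n$.

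For fixed $n$, I would minimize $J_n$ on $\cD(\rho)$. Mass-subcriticality (g3) together with Gagliardo--Nirenberg yields $\int_{\R^N} G_{n,+}(u) \, dx \le \eps\|\nabla u\|_2^2 + C_\eps$ on $\cD(\rho)$, uniformly in $n$, so $J_n$ is coercive and bounded below. A scaling argument using a bump of height $\xi_0$ from (g4) on a ball of radius $R \sim \rho^{2/N}$ produces $\inf_\cD J_n \le -C\rho^2 + C' \rho^{2(N-1)/N} < 0$ once $\rho$ exceeds a threshold $\bar\rho$, uniformly in $n$. Schwarz-symmetrizing a minimizing sequence (which lowers $J_n$ and preserves the $L^2$-norm) allows us to assume radial, non-negative, radially non-increasing iterates; the compact embedding $H^1_{\rm rad}(\R^N) \hookrightarrow L^q(\R^N)$ for $q \in (2, 2^*)$, a Brezis--Lieb argument, and the negativity of the infimum (which precludes $L^2$-loss at infinity) deliver a minimizer $u_n \in \cS$ with the stated symmetry and sign and a Lagrange multiplier $\lambda_n$. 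The Pohozaev--Nehari identity combined with $J_n(u_n) < 0$ and the mass-subcritical growth forces $\lambda_n \ge \lambda_0 > 0$ uniformly in $n$.

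To pass to the limit $n \to \infty$, the sequence $(u_n) \subset \cD$ is bounded in $H^1_{\rm rad}(\R^N)$, so up to a subsequence $u_n \weakto u$ in $H^1$, strongly in $L^q$ for $q \in (2, 2^*)$, and pointwise a.e. The uniform lower bound on $\lambda_n$ and standard elliptic estimates applied to $-\Delta u_n + \lambda_n u_n = g_n(u_n)$ give uniform exponential decay of the $u_n$, which upgrades convergence to $L^2$; hence $u \in \cS$. Splitting the nonlinear integrand on $\{|u_n| \ge 1/n\}$ versus its complement (where $g_n$ is linearly controlled) and invoking dominated convergence, one passes to the limit in the weak form to conclude that $(u,\lambda)$, with $\lambda = \lim \lambda_n > 0$, solves \eqref{eq}--\eqref{eq:mass}. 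Lower semicontinuity $J(u) \le \liminf_n J_n(u_n) = \min_\cD J$ (via Fatou on $G_+$ combined with strong $L^q$ convergence for the sublinear contribution) together with a density-based reverse inequality shows that $u$ attains $\min_\cD J$; constant sign, radial symmetry, and radial monotonicity are inherited from the $u_n$.

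The chief obstacle I anticipate is securing the uniform lower bound $\lambda_n \ge \lambda_0 > 0$ and the ensuing uniform exponential decay of $u_n$: both are essential to rule out $L^2$-mass loss at infinity in the limit and, even more delicately, to justify passing to the limit under the integral sign near the origin, where the strongly sublinear behaviour of $g$ makes the convergence of $\int G_n(u_n)\,dx$ to $\int G(u)\,dx$ the most technically demanding step.
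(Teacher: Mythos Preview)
Your overall strategy---approximate, minimize on $\cD$, pass to the limit---is the paper's, but one key step fails as written and another takes a harder route than necessary.

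The genuine gap is Schwarz symmetrization. Passing from a sign-changing $u$ to $|u|^*$ need not lower $J_n$: while the rearrangement $|u| \mapsto |u|^*$ preserves $\int G_n$ by equimeasurability, the preliminary step $u \mapsto |u|$ does so only if $G_n$ is even, and (g0)--(g4) impose no parity on $g$. Hence you cannot restrict the minimization to radial non-negative competitors, the compact embedding of $H^1_{\rm rad}$ is unavailable, and your existence argument for the perturbed minimizer breaks down (as does the claim that the limit is non-negative). The paper instead runs a concentration-compactness argument (strict subadditivity of $\rho \mapsto c_\varepsilon(\rho)$) with no sign or symmetry restriction, and deduces constant sign and radial symmetry of each $u_\varepsilon$ only \emph{a posteriori} from Jeanjean--Lu's result on global constrained minimizers.

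For the passage to the limit, the uniform lower bound $\lambda_n \ge 2\beta/\rho^2$ is in fact immediate from the Poho\v{z}aev relation $J_n(u_n) = \tfrac1N|\nabla u_n|_2^2 - \tfrac{\lambda_n}{2}\rho^2 < -\beta$, so that is not the obstacle you feared. Uniform exponential decay, on the other hand, is not ``standard'': the usual comparison needs $|g_n(s)/s|$ small near zero, and your cutoff makes this bound blow up with $n$; one can repair it using the favourable sign $g(s)/s \to -\infty$, but only once the $u_n$ are known to be radial and of constant sign---which sends you back to the symmetrization gap. The paper avoids decay entirely: after showing that the weak limit $u$ solves the limiting equation, it writes the Poho\v{z}aev identity for both $u_\varepsilon$ and $u$, applies Fatou to the $G_-$ terms and strong $L^p$ convergence (from radiality of $u_\varepsilon$) to the $G_+$ terms, and a squeeze forces $|u_\varepsilon|_2 \to |u|_2$ and $|\nabla u_\varepsilon|_2 \to |\nabla u|_2$ simultaneously.
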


\begin{Rem}\label{rem:FiniteMass}
Of course, (g0)--(g4) include the classical case {\color{blue}$\lim_{s \to 0} g(s) / s = 0$}, which implies that $J$ is of class $\cC^1$ in $H^1(\R^N)$. In this case, with the notations of Theorem \ref{th:mainsub}, $u$ is actually a critical point of $J|_\cS$. In addition, if $\lim_{s \to 0} G(s) / |s|^{2+4/N} = \infty$ (i.e., in the mass-subcritical regime), then $\bar{\rho} = 0$ in Theorem \ref{th:mainsub} (see, e.g., \cite{Schino} for more details). An important example in this context is given by nonlinear media with \emph{saturation} \cite{Langbein}, where $g(s) = s^3 / (1 + s^2)$.
\end{Rem}

\begin{Rem}\label{rem:sign_lam}
Under the assumptions (g0)--(g4), if there exists a solution $(u,\lambda) \in H^1(\R^N) \times \R$ to \eqref{eq}--\eqref{eq:mass} with $J(u) \le 0$, then $\lambda > 0$. As a matter of fact, $J(u) \le 0$ implies that $\int_{\R^N} G(u) \, dx$ is finite, thus from \cite[Proposition 3.1]{MederskiBL2} $(u,\lambda)$ satisfies the Poho\v{z}aev identity $(N-2) \int_{\R^N} |\nabla u|^2 \, dx = 2N \int_{\R^N} G(u) - \frac12 \lambda |u|^2 \, dx$ and, consequently, $0 \ge J(u) = \frac1N |\nabla u|_2^2 - \frac{\lambda}{2} \rho^2$.
\end{Rem}

Our next result concerns the limit of the ground state energy map $\rho \mapsto \inf_{\cD(\rho)} J$ as $\rho \to \infty$.
\begin{Prop}\label{pr:gsem}
If $g$ satisfies (g0)--(g4), then
\[
\lim_{\rho \to \infty} \inf_{\cD(\rho)} J = -\infty.
\]
\end{Prop}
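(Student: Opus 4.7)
The plan is to exhibit, for each sufficiently large $\rho$, a competitor $u_\rho \in \cS(\rho) \subset \cD(\rho)$ such that $J(u_\rho) \to -\infty$ as $\rho \to \infty$. This is a Berestycki--Lions style scaling argument adapted to the $L^2$-constraint: pick once and for all a compactly supported, bounded test function $\phi$ with $\int_{\R^N} G(\phi)\,dx > 0$, and then dilate $\phi$ spatially to make its mass match $\rho^2$.

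For the construction of $\phi$, note that $G$ is continuous with $G(0)=0$ by (g0) and $G(\xi_0) > 0$ by (g4). For $R > 0$, set $\phi_R(x) := \xi_0 \eta_R(x)$, where $\eta_R$ is a Lipschitz plateau equal to $1$ on the ball $B_R$, vanishing outside $B_{R+1}$, and linearly interpolated on the annulus. Then $\phi_R \in H^1(\R^N) \cap L^\infty(\R^N)$ is compactly supported and $|\phi_R(x)| \le |\xi_0|$ everywhere, so with $M := \sup_{|s|\le|\xi_0|}|G(s)| < \infty$,
\[
\int_{\R^N} G(\phi_R) \, dx \;\ge\; G(\xi_0) |B_R| - M\bigl(|B_{R+1}| - |B_R|\bigr) \;\ge\; c_1 R^N - c_2 R^{N-1},
\]
which is strictly positive for $R$ large. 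Fix such an $R$ and set $\phi := \phi_R$.

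For $t > 0$, define $\phi_t(x) := \phi(x/t)$; a change of variables yields
\[
|\phi_t|_2^2 = t^N |\phi|_2^2, \qquad |\nabla \phi_t|_2^2 = t^{N-2}|\nabla \phi|_2^2, \qquad \int_{\R^N} G(\phi_t)\,dx = t^N \int_{\R^N} G(\phi)\,dx.
\]
For any $\rho > |\phi|_2$, choose $t(\rho) := (\rho/|\phi|_2)^{2/N}$, so that $\phi_{t(\rho)} \in \cS(\rho) \subset \cD(\rho)$. Then
\[
\inf_{\cD(\rho)} J \;\le\; J(\phi_{t(\rho)}) \;=\; \frac{t(\rho)^{N-2}}{2}|\nabla \phi|_2^2 - t(\rho)^N \int_{\R^N} G(\phi)\,dx,
\]
and since $N > N-2$ and $\int G(\phi) > 0$, the right-hand side tends to $-\infty$ as $\rho \to \infty$.

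There is essentially no obstacle beyond the plateau trick producing a $\phi$ with $\int G(\phi) > 0$. In particular, the strongly sublinear behaviour of $g$ at the origin plays no role here: the amplitude of $\phi_{t(\rho)}$ stays bounded by $|\xi_0|$ as $t(\rho) \to \infty$, and $G(\phi_{t(\rho)})$ remains integrable because of compact support. Likewise, the growth assumptions (g2) and (g3) are not needed for this upper bound (they would only be relevant for finiteness or attainment of the infimum).
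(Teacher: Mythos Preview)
Your argument is correct. The core idea---the Berestycki--Lions plateau construction followed by an $L^2$-preserving dilation---is exactly the engine behind the paper's Lemma~\ref{lem:neg}, so in spirit the approaches coincide.

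Where they differ is in routing. The paper proves Lemma~\ref{lem:neg} for the \emph{perturbed} functionals $J_\eps$ (obtaining $c_\eps(\rho) < -\beta$ for $\rho$ large, uniformly in $\eps$) and then invokes Remark~\ref{rem:conv}\,(i), i.e.\ the convergence $c_\eps(\rho) \to c(\rho)$ established inside the proof of Theorem~\ref{th:mainsub}, to transfer the estimate to $J$. Your proof bypasses the approximation entirely: since the plateau function $\phi$ is bounded and compactly supported, $G(\phi)$ is automatically integrable regardless of the strongly sublinear behaviour of $g$ near zero, so one can compute $J(\phi_{t(\rho)})$ directly. This is more self-contained and, as you note, uses only (g0) and (g4); the paper's route, by contrast, leans on the full existence machinery already developed for Theorem~\ref{th:mainsub}. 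Both are valid, but yours is the more economical path for this particular statement.
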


Moreover, we obtain the relative compactness in $H^1(\R^N)$ of minimizing sequences for $\inf_{\cD(\rho)} J$ up to translations. Here and in the sequel, with a small abuse of notations, if $(u_n)_n$ is a sequence in $X$, we write $u_n \in X$ instead of $(u_n)_n \subset X$.
\begin{Th}\label{th:relcom}
If (g0)--(g4) hold, then there exists $\bar{\rho} > 0$ such that for every $\rho > \bar{\rho}$, if $0 < \rho_n \to \rho$ and $u_n \in \cD(\rho_n)$ is such that $J(u_n) \to \inf_{\cD(\rho)} J$, then $u_n$ is relatively compact in $H^1(\R^N)$ up to translations.
\end{Th}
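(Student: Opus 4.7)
The plan is to apply a concentration--compactness argument to the minimizing sequence $(u_n)_n$. Writing $m(r) := \inf_{\cD(r)} J$, Theorem \ref{th:mainsub} gives $m(\rho) < 0$ for $\rho > \bar\rho$. The $L^2$-bound on $(u_n)$ is automatic from $|u_n|_2 \le \rho_n \to \rho$; for the gradient bound, I would combine (g1) and (g3) with Gagliardo--Nirenberg to obtain, for every $\eps > 0$,
\[
\int_{\R^N} G_+(u_n)\,dx \le \eps \|\nabla u_n\|_2^2 |u_n|_2^{4/N} + C_\eps |u_n|_2^2,
\]
then absorb via $\tfrac12\|\nabla u_n\|_2^2 = J(u_n) + \int G(u_n)\,dx \le J(u_n) + \int G_+(u_n)\,dx$. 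To exclude vanishing, if $\sup_{y\in\R^N}\int_{B(y,1)} u_n^2\,dx \to 0$ then Lions' lemma yields $u_n \to 0$ in $L^q$ for $q \in (2, 2^*)$, and a three-zone estimate on $G_+$ using (g1), (g2), (g3) gives $\int G_+(u_n)\,dx \to 0$, forcing $\liminf J(u_n) \ge 0$, in contradiction with $m(\rho)<0$.

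Consequently there exist $y_n \in \R^N$ such that $\tilde u_n := u_n(\cdot - y_n) \rightharpoonup u$ weakly in $H^1(\R^N)$ and a.e., with $u \ne 0$; set $\mu := |u|_2 \in (0, \rho]$ and $v_n := \tilde u_n - u$. A Brezis--Lieb decomposition -- standard for the $L^2$- and $H^1$-norms, and for $\int G$ via a lemma tailored to (g1)--(g3) in the spirit of \cite{MederskiBL2} -- yields
\[
|\tilde u_n|_2^2 = \mu^2 + |v_n|_2^2 + o(1), \qquad J(\tilde u_n) = J(u) + J(v_n) + o(1).
\]
Everything reduces to proving $\mu = \rho$. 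Once this is in hand, $v_n \to 0$ in $L^2$ and hence (by interpolation against the $H^1$-bound) in $L^q$ for $q \in [2, 2^*)$, so the growth estimates give $\int G(\tilde u_n)\,dx \to \int G(u)\,dx$, and then $J(\tilde u_n) \to J(u) = m(\rho)$ forces $\|\nabla \tilde u_n\|_2 \to \|\nabla u\|_2$, upgrading the convergence to strong in $H^1$.

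The crux, and the main obstacle, is to show $\mu = \rho$. Arguing by contradiction, assume $\mu<\rho$. If $v_n$ vanishes in the Lions sense, the three-zone estimate again gives $\int G_+(v_n)\,dx \to 0$ (using $|v_n|_q \to 0$ for $q \in (2, 2^*)$, even though $|v_n|_2$ need not vanish), hence $\liminf J(v_n) \ge 0$; combined with $J(u) + \lim J(v_n) = m(\rho)$ and $J(u) \ge m(\rho)$, this gives $J(u) = m(\rho)$ with $|u|_2 < \rho$, contradicting Theorem \ref{th:mainsub} which locates every minimizer of $m(\rho)$ on $\cS(\rho)$. Otherwise $v_n$ fails to vanish, and iterating the previous steps produces a finite decomposition $\tilde u_n = \sum_{i=1}^k U_i(\cdot - y_n^i) + r_n$ with $k\ge 2$, $|y_n^i - y_n^j|\to\infty$, $r_n$ vanishing, $\mu_i := |U_i|_2 > 0$, $\sum \mu_i^2 = \rho^2$, $\sum J(U_i) = m(\rho)$, and each $U_i$ attaining $m(\mu_i)$ -- hence satisfying $-\Delta U_i + \lambda_i U_i = g(U_i)$ with $\lambda_i > 0$ by Remark \ref{rem:sign_lam}. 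To close, one establishes the strict subadditivity $m(\rho) < m(\mu_1) + m(\mu_2)$ (taking $k=2$ without loss of generality): the two-parameter scaling $(U_1,U_2) \mapsto (U_1(\cdot/t_1), U_2(\cdot/t_2))$ along the mass curve $t_1^N \mu_1^2 + t_2^N \mu_2^2 = \rho^2$ has, by Pohozaev, first-order variation at $(1,1)$ proportional to $\lambda_1 - \lambda_2$ along the admissible direction; if $\lambda_1 \ne \lambda_2$ this is a direction of strict decrease, immediately contradicting $\sum J(U_i) = m(\rho)$, while the degenerate case $\lambda_1 = \lambda_2$ is settled by a second-order computation (negative for $N \ge 3$) or a uniqueness argument for $-\Delta u + \lambda u = g(u)$. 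This strict-subadditivity step is subtle precisely because $m \le 0$ forbids its derivation from monotonicity alone, and the only mechanism available is the interplay between the Pohozaev identity and the scaling $u \mapsto u(\cdot/t)$.
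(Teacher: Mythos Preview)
Your overall structure (boundedness, non-vanishing, profile decomposition, strict subadditivity) is the right one, but two steps have real problems.

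\textbf{The Brezis--Lieb splitting for $J$.} You write that the decomposition $J(\tilde u_n)=J(u)+J(v_n)+o(1)$ is ``standard\ldots via a lemma tailored to (g1)--(g3)''. But (g1)--(g3) control only $G_+$; they say nothing about $G_-$, and the strongly sublinear regime allows, e.g., $G_-(s)\sim -s^2\ln s^2$ or $G_-(s)\sim|s|^q$ with $1<q<2$ near $0$. For such $G_-$ there is no reason why $G_-(v_n)=G_-(\tilde u_n-u)$ should even be integrable, let alone satisfy a Brezis--Lieb identity: the usual hypothesis $|G_-(a+b)-G_-(a)|\le\eps\varphi(a)+\psi_\eps(b)$ would force $\varphi$ to involve $G_-$ itself, and you have no a priori bound on $\int G_-(v_n)$. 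The paper avoids this entirely by running the splitting with the \emph{regularised} functional $J_\eps$ (for which $G_-^\eps(s)\lesssim_\eps s^2$ near $0$, so standard Brezis--Lieb applies), obtaining the chain
\[
c_\eps(\rho)\ge c_\eps(\beta)+c_\eps(\gamma)+c_\eps(\delta)\ge c_\eps\bigl(\sqrt{\beta^2+\gamma^2+\delta^2}\bigr)\ge c_\eps(\rho),
\]
and only then letting $\eps\to0^+$, using monotone convergence for $J_\eps(u)\to J(u)$, $J_\eps(v)\to J(v)$ and the key fact $c_\eps(\rho)\to c(\rho)$ established earlier (Remark~\ref{rem:conv}(i)). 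This is precisely the point where the approximation scheme earns its keep.

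\textbf{Strict subadditivity.} Your Pohozaev-plus-second-order argument, with a fallback to uniqueness for $-\Delta u+\lambda u=g(u)$ when $\lambda_1=\lambda_2$, is much heavier than needed and incomplete (uniqueness for general $g$ under (g0)--(g4) is not available). The paper's Lemma~\ref{lem:strict} (valid for $J$ by Remark~\ref{rem:subadd}(a)) does the job in one line: if $U\ne0$ attains $m(\mu)$, then for $s>1$
\[
m(\sqrt{s}\,\mu)\le J\bigl(U(s^{-1/N}\cdot)\bigr)=s\Bigl(\tfrac{1}{2s^{2/N}}|\nabla U|_2^2-\int G(U)\Bigr)<s\,J(U)=s\,m(\mu),
\]
and an elementary manipulation (splitting $\frac{\mu_1^2+\mu_2^2}{\mu_i^2}$) yields $m(\sqrt{\mu_1^2+\mu_2^2})<m(\mu_1)+m(\mu_2)$. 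No Pohozaev, no second variation, no uniqueness. Your claim that ``$m\le0$ forbids its derivation from monotonicity alone'' is simply wrong: the strict inequality comes from $|\nabla U|_2>0$, nothing else.
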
It is well known that such a relative compactness is an important step toward the orbital stability of solutions to \eqref{eq}--\eqref{eq:mass}. Unfortunately, we currently do not have the necessary tools to investigate the dynamics of \eqref{eq:time} in this general setting, hence this issue is postponed to future work. Moreover, unlike the classical approach when $J$ is of class $\cC^1$, where the existence of a constrained minimizer for $J$ and, consequently, a solution to \eqref{eq}--\eqref{eq:mass} is a by-product of the aforementioned relative compactness, here we prove first the existence of a solution to \eqref{eq}--\eqref{eq:mass} and then use this fact to prove the compactness result.

When $g$ is as in \eqref{eq:log+p}, we additionally have the following non-existence result.
\begin{Th}\label{th:nonex}
Let $g$ be given by \eqref{eq:log+p} with $\alpha > 0$, $\mu \in \R$, and $2 < p \le 2^*$ ($p > 2$ if $N = 2$).

(i) If $\rho \gg 1$ and either $-\frac{\alpha p}{p-2}e^{-p/2} < \mu \le 0$, or $\mu > 0$ and $2<p<2+4/N$, then there exists a solution $(u,\lambda) \in H^1(\R^N) \times (0,\infty)$ to \eqref{eq}--\eqref{eq:mass} such that $G(u) \in L^1(\R^N)$ and $J(u) = \min_{\cD}J < 0$.

(ii) If $\mu \le -\frac{\alpha p}{p-2}e^{-p/2}$, then there exist no solutions $(u,\lambda) \in H^1(\R^N) \times [0,\infty)$ to \eqref{eq}--\eqref{eq:mass} such that $G(u) \in L^1(\R^N)$.
\end{Th}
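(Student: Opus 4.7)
For part (i), my plan is to verify assumptions (g0)--(g4) for $g(s) = \alpha s \ln s^2 + \mu |s|^{p-2}s$ and then invoke Theorem \ref{th:mainsub} together with Remark \ref{rem:sign_lam}. Conditions (g0) and (g2) are immediate from continuous extension at $0$ and from the polynomial-plus-logarithmic growth of $g$. For (g1), the dominant term of $g(s)$ near $0$ is $\alpha s \ln s^2$, whose sign is opposite to that of $s$; hence $g_+ \equiv 0$ and $G_+ \equiv 0$ in a neighbourhood of the origin. For (g3): if $\mu > 0$, the constraint $p < 2 + 4/N$ keeps the $|s|^p$ contribution to $G_+$ mass-subcritical, while the $s^2 \ln s^2$ part of $G_+$ is always $o(|s|^{2+4/N})$; if $\mu \le 0$, then $\mu |s|^{p-2}s$ eventually dominates and reverses the sign of $g$ at infinity, so $g_+$ reduces to the log term and $G_+$ grows at most like $s^2 \ln s^2$, again $o(|s|^{2+4/N})$. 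The most delicate check is (g4), which I would do by a direct computation: for $\mu < 0$,
\[
\max_{s>0}\frac{G(s)}{s^2} = \frac{\alpha}{p-2}\ln\frac{\alpha p}{|\mu|(p-2)e^{p/2}},
\]
attained at $s_0$ with $s_0^{p-2} = \alpha p/(|\mu|(p-2))$, and this maximum is positive precisely when $\mu > -\frac{\alpha p}{p-2}e^{-p/2}$; the cases $\mu \ge 0$ are trivial. Theorem \ref{th:mainsub} then yields, for $\rho$ large enough, $(u,\lambda) \in H^1(\R^N) \times (0,\infty)$ with $J(u) = \min_\cD J < 0$, and the finiteness of $J(u)$ together with the explicit form of $G$ rules out $\int_{\R^N} u^2 |\ln u^2|\,dx = \infty$, which delivers $G(u) \in L^1(\R^N)$.

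For part (ii), my strategy is to show that the hypothesis $\mu \le -\frac{\alpha p}{p-2}e^{-p/2}$ forces $G \le 0$ on all of $\R$, and then to use Poho\v{z}aev to contradict the existence of a nontrivial $u$. The very same computation of $\max_{s>0} G(s)/s^2$ from part (i) now yields $G \le 0$ on $[0,\infty)$ under the assumption, and by the evenness of $G$ this extends to all of $\R$. Suppose by contradiction that $(u,\lambda) \in H^1(\R^N) \times [0, \infty)$ is a solution with $G(u) \in L^1(\R^N)$. By Remark \ref{rem:sign_lam} (which applies because $G(u) \in L^1$ and relies on \cite[Proposition 3.1]{MederskiBL2}), the Poho\v{z}aev identity
\[
\frac{N-2}{2}|\nabla u|_2^2 + \frac{N\lambda}{2}\rho^2 = N \int_{\R^N} G(u) \, dx \le 0
\]
holds. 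When $N \ge 3$, both terms on the left are non-negative (using $\lambda \ge 0$), so both vanish; hence $u$ is a constant in $H^1(\R^N)$, i.e., $u \equiv 0$, contradicting $|u|_2 = \rho > 0$. When $N = 2$, the gradient term disappears, leaving $\lambda = 0$ and $\int_{\R^N} G(u)\,dx = 0$; combined with $G \le 0$, this forces $G(u) = 0$ a.e., so $u$ takes values a.e.\ in the discrete zero set of $G$---only $\{0\}$ if the inequality on $\mu$ is strict, or $\{0, \pm s_0\}$ in the borderline case. The standard fact that characteristic functions of nontrivial sets do not lie in $H^1$ then forces $u \equiv 0$, again a contradiction.

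The principal obstacle, and at the same time the source of the sharpness of the theorem, is the explicit computation producing the threshold $-\frac{\alpha p}{p-2}e^{-p/2}$; it has to be used in both directions---once to validate (g4) in (i) and once to secure $G \le 0$ in (ii). A smaller technical point is the borderline case in $N = 2$, which requires the extra observation on $H^1(\R^2)$ functions with finite discrete range.
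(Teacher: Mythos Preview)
Your proposal is correct and follows essentially the same route as the paper: verify (g0)--(g4) via the explicit computation of $\max_{s>0} G(s)/s^2$ to identify the threshold $-\frac{\alpha p}{p-2}e^{-p/2}$, apply Theorem~\ref{th:mainsub} for (i), and use the Poho\v{z}aev identity together with $G\le 0$ for (ii). The only minor difference is in the $N=2$ borderline case of (ii): the paper invokes elliptic regularity (continuity of $u$) to conclude $u$ is constant, whereas you use the Sobolev-space fact that a function in $H^1$ with finite discrete range must be constant a.e.\ (since $\nabla u=0$ a.e.\ on each level set); both arguments are valid and lead to the same contradiction.
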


\begin{Rem}\label{rem:+energy}
In Theorem \ref{th:mainsub}, we look for (ground state) solutions with sufficiently large $\rho$ because this implies that the energy is negative, which is used, in turn, to ensure such an existence. Then, as we pointed out in Remark \ref{rem:sign_lam}, a consequence is that $\lambda>0$. On the other hand, in the context of Theorem \ref{th:nonex} (i) and $\mu>0$, ground state solutions still exist for all $\rho>0$ as in Corollary \ref{cor:corlog} below, but with no information about the Lagrange multiplier or the energy.
%cf. \cite[Theorem 1.1]{ZhangZhang}, but with no information about the Lagrange multiplier and with additional assumptions about the nonlinear term.
\end{Rem}

We follow the approach of \cite{MederskiBL2} and consider a family of approximating problems. Note that the approximations are different from those in \cite{ITWZ}, where the nonlinearity is modified in a neighbourhood of infinity, and are more similar to those in \cite{GLN}, although not the same as the nonlinearity therein is of the form \eqref{eq:BBM}.

Let $g_{-}(s):=g_{+}(s)-g(s)$ and $G_{-}(s):=G_{+}(s)-G(s)\geq 0$ for $s\in\R$.
In view of (g1) and (g3), $G_{+}(u)\in L^{1}(\R^N)$ for $u \in H^1(\R^N)$; however,
$G_{-}(u)$ may not be integrable
unless $G_{-}(s) \lesssim |s|^{2}$ for small $|s|$. Moreover, for any $u \in H^1(\R^N)$, $J$ is differentiable at $u$ along functions in $\cC_0^\infty(\R^N)$, but it is not of class $\cC^1$ in general. In order to overcome this problem, for every $\eps\in (0,1)$ let us take an even function $\vp_\eps\colon\R\to [0,1]$ such that $\vp_\eps(s) = |s| / \varepsilon$ for $|s|\leq \eps$, $\vp_{\eps}(s) = 1$ for $|s|\geq \eps$. We introduce a new functional,
\begin{equation}\label{eq:actionEPS}
J_\eps(u) = \frac12 \int_{\R^N} |\nabla u|^2 \, dx + \int_{\R^N} G_{-}^\eps(u) \, dx - \int_{\R^N} G_{+}(u) \, dx,
\end{equation}
such that $G_{-}^\eps(s)=\int_0^s \vp_\eps(t)g_-(t)\, dt$, $s\in\R$,
and now observe that $G_{-}^\eps(s) \leq c_\eps|s|^{2}$ for every $|s| \le 1$ and some constant $c_\eps>0$ depending only on $\eps>0$.
Hence, for $\eps\in (0,1)$,  $J_\eps$ is of class $\cC^1$ on $H^1(\R^N)$.
%well-defined on $H^1(\R^N)$, continuous, and $J_\eps'(u)(v)$ exists for every $u,v \in H^1(\R^N)$.
%Hence we call $u$ a {\em critical point} of $J_\eps$ provided that $J_\eps'(u)(v)=0$ for any $v\in\cC_0^\infty(\R^N)$.

The idea of considering the $L^2$-disc $\cD$ instead of the $L^2$-sphere $\cS$ was introduced in \cite{BiegMed} in a context where the nonlinear term is mass-supercritical and Sobolev-subcritical at infinity (i.e., $\lim_{|s| \to \infty} G(s) / |s|^{2+4/N} = \infty$, $\lim_{|s| \to \infty} G(s) / |s|^{2^*} = 0$) and mass-critical or -supercritical at the origin (i.e., $\limsup_{s \to 0} G(s) / |s|^{2+4/N} < \infty$); later on, it was exploited again in \cite{MederskiSchino}, in a similar context that allows systems with Sobolev-critical nonlinearities, in \cite{Schino}, for equations and systems of equations in the mass-critical or -subcritical setting, and in \cite{BMS}, where polyharmonic operators and Hardy-type potentials are considered. One of the reasons for this choice was that the embedding $H^1(\R^N) \hookrightarrow L^2(\R^N)$ is not compact, even if considering radial functions or with symmetries as in \cite{Lions82}, which, in turn, causes that the limit point of a weakly convergent sequence in $\cS$ need not belong to $\cS$; on the contrary, limit points of weakly convergent sequences in $\cD$ stay in $\cD$, and the additional information that such a weak limit point belongs to the set one is working with proves to be useful. In this paper, this strategy is fundamental: it allows the critical point found in Theorem \ref{th:pert_n} below -- the existence result for the perturbed problem -- to be a minimizer of the corresponding energy functional over the whole set $\cD$, and this fact plays an important role in the proof of Theorem \ref{th:mainsub} because the ``candidate minimizer'' of $J|_\cD$ might not, initially, belong to $\cS$.

Now, we turn to the problem of finding multiple solutions (in fact, infinitely many) to \eqref{eq}--\eqref{eq:mass}. In this case, we need to change both the assumptions about $g$ and our approach. The idea is to build a subspace of $H^1(\R^N)$ in the spirit of $\cH$, but with a more general setting. The issues with using the previous approach to obtain an infinite sequence of solutions are mainly two. First, even at the level of the perturbed functional, one can obtain an arbitrary large number of solutions if $\rho$ is sufficiently large, but it is unclear whether infinitely many solutions exist; this is caused by the lack of compact embedding into $L^2(\R^N)$, which in turn raises the need for each of the usual minimax values (cf. Theorem \ref{Th:JL} below) to be negative, for which we need $\rho$ to be large (and how large depends differently on each minimax level). Second, fixed a family (depending on $\varepsilon$) of critical values of the perturbed functional obtained in the way we have just briefly mentioned, and considering the corresponding family of critical points, it is not clear whether this family converges to a critical point of the unperturbed functional, unlike what happens in the Proof of Theorem \ref{th:mainsub}, where we can exploit the information that the critical levels are minima.

We assume that the right-hand side in \eqref{eq} is given by
\begin{equation}\label{eq:gfa}
g(s) = f(s) - a(s)
\end{equation}
together with the following assumptions, where $F(s) = \int_0^s f(t) \, dt$, $A(s) = \int_0^s a(t) \, dt$, $F_+$ is defined via \eqref{eq:G+} replacing $g$ with $f$, and $f_+ = F_+'$.
\begin{itemize}
	\item [(A)] $A \in \cC^1(\R)$ is an \textit{N}-function that satisfies the $\De_2$ and $\nabla_2$ conditions globally and such that $\lim_{s \to 0} A(s)/s^2 = \infty$ and $s \mapsto a(s)s$ is convex.
	\item [(f0)] $f \colon \R \to \R$ is continuous and odd.
	\item [(f1)] $\lim_{s \to 0} f_+(s) / s = 0$.
	%\item [(f4)] There exists $\xi_0 > 0$ such that $F(\xi_0) > A(\xi_0)$.
	\item [(f2)] If $N \ge 3$, then $|f(s)| \lesssim a(s) + |s| + |s|^{2^*-1}$;\\
	If $N = 2$, then for all $q \ge 2$ and $\alpha > 4\pi$ there holds $|f(s)| \lesssim a(s) + |s| + |s|^{q-1} \left(e^{\alpha s^2} - 1\right)$.
	\item [(f3)] $\limsup_{|s| \to \infty} f_+(s) / |s|^{1+4/N} < \infty$.
\end{itemize}
Observe that (f3) implies
\begin{equation}\label{eq:eta}
\eta := \limsup_{|s| \to \infty} \frac{F_+(s)}{|s|^{2+4/N}} < \infty.
\end{equation}

Let us define
\begin{equation*}
W := \left\{u \in H^1(\R^N) : A(u) \in L^1(\R^N) \right\}\end{equation*}
and, for a subgroup $\cO \subset \cO(N)$,
\begin{equation*}
W_{\cO} := \left\{u \in W : u(g\cdot) = u \text{ for all } g \in \cO\right\}.
\end{equation*}
If $N = 4$ or $N \ge 6$, with the purpose of finding non-radial solutions (cf. \cite{BartschWillem,MederskiBL}), fix $2 \le M \le N/2$ such that $N - 2M \ne 1$ and let $\tau(x) = (x_2,x_1,x_3)$ for $x = (x_1,x_2,x_3) \in \R^M \times \R^M \times \R^{N-2M}$ ($x_3$ is removed form the definition of $\tau$ if $N = 2M$). Then, let
\begin{equation*}
\cX := \left\{u \in W_{\cO(M) \times \cO(M) \times \cO(N-2M)} : u \circ \tau = -u\right\}
\end{equation*}
($\cO(N-2M)$ is removed from the definition of $\cX$ if $N = 2M$) and observe that $\cX \cap W_{\cO(N)} = \{0\}$.

For $2 < p < 2^*$, let us denote by $C_{N,p}$ the best constant in the Gagliardo--Nirenberg inequality:
\begin{equation}\label{eq:GN}
|u|_p \le C_{N,p} |\nabla u|_2^{N(1/2-1/p)} |u|_2^{1-N(1/2-1/p)} \quad \forall \, u \in H^1(\R^N).
\end{equation}

Our multiplicity result reads as follows.
\begin{Th}\label{th:multip}
Let $g$ be given by \eqref{eq:gfa} such that (A), (f0)--(f3), and
\begin{equation}\label{eq:eta-rho}
2 \eta C_{N,2+4/N}^{2+4/N} \rho^{4/N} < 1
\end{equation}
hold, where $\eta$ is defined in \eqref{eq:eta}.
%Let also $\cO \subset \cO(N)$ be a subgroup compatible with $\R^N$.
Then there exist infinitely many solutions $(u,\lambda) \in W_{\cO(N)} \times \R$ to \eqref{eq}--\eqref{eq:mass}, one of which -- say, $(\bar{u},\bar{\lambda})$ -- having the property that $J(\bar u) = \min_{\cS \cap W} J$. If, in addition, $N = 4$ or $N \ge 6$, then there exist infinitely many solutions $(v,\mu) \in \cX \times \R$, one of which -- say, $(\bar{v},\bar{\mu})$ -- having the property that $J(\bar{v}) = \min_{\cS \cap \cX} J$.
\end{Th}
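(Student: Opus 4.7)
The plan is to realize $J$ as a $\cC^1$-functional on the Orlicz--Sobolev--type space $W$ and apply an equivariant minimax principle. I would equip $W$ with the norm $\|u\|_W := \|u\|_{H^1} + \|u\|_A$, where $\|\cdot\|_A$ is the Luxemburg norm induced by the $N$-function $A$. Thanks to the $\De_2$ and $\nabla_2$ conditions, $W$ is a reflexive Banach space continuously embedded in $H^1(\R^N)$, the modular $u \mapsto \int_{\R^N} A(u)\,dx$ is continuous on $W$, and $\cC_0^\infty(\R^N)$ is dense. Combining this with (f0)--(f2) and the convexity of $s \mapsto a(s)s$, a standard Nemytskii argument gives $J \in \cC^1(W)$ and shows that $\cS \cap W$ is a $\cC^1$-submanifold of codimension one. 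Using (f3), \eqref{eq:GN}, and \eqref{eq:eta-rho}, for every $u \in \cS \cap W$ one obtains
\[
J(u) \ge \frac{1 - 2(\eta+\eps) C_{N,2+4/N}^{2+4/N} \rho^{4/N}}{2}\,|\nabla u|_2^2 + \int_{\R^N} G_-(u) \, dx - C
\]
with $\eps>0$ chosen to preserve \eqref{eq:eta-rho}, so $J|_{\cS \cap W}$ is bounded below and coercive in $|\nabla u|_2$ and, via the $\nabla_2$ condition, in $\|u\|_W$.

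Next I would obtain the ground state $\bar u \in \cS \cap W_{\cO(N)}$ by direct minimization. Given a minimizing sequence $(u_n)$, coercivity yields boundedness in $W$; after passing to a subsequence, $u_n \weakto \bar u$ in $W$ and $u_n \to \bar u$ in $L^q(\R^N)$ for every $q \in (2,2^*)$ by the compact radial embedding. The convexity of $A$ and of $s \mapsto a(s)s$ yields the weak lower semicontinuity of $u \mapsto \int A(u)\,dx$, while (f2)--(f3) together with the Strauss radial decay (for $N \ge 3$) or the Moser--Trudinger control (for $N = 2$) give $\int F(u_n) \to \int F(\bar u)$. Hence $J(\bar u) \le \inf_{\cS \cap W_{\cO(N)}} J$; the equality $|\bar u|_2 = \rho$ is secured by first minimizing over the disc $\cD \cap W_{\cO(N)}$ in the spirit of the discussion following \eqref{eq:action} and then using a rescaling $u_\sigma(x) := u(x/\sigma)$ to exclude $|\bar u|_2 < \rho$. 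A Lagrange-multiplier argument on the $\cC^1$-manifold $\cS \cap W$, together with Palais' principle of symmetric criticality, turns $\bar u$ into a weak solution of \eqref{eq}; the analogous procedure in $\cS \cap \cX$, based on the non-radial compact embedding of Bartsch--Willem--Mederski, yields $\bar v$ when $N = 4$ or $N \ge 6$.

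Since $f$ is odd and $A$ is even, $J$ is even, so I would produce infinitely many solutions via $\Z_2$-equivariant minimax on $\cM := \cS \cap W_{\cO(N)}$. Denoting by $\gamma$ the Krasnosel'skii genus, set
\[
c_k := \inf\bigl\{\sup_{u \in A} J(u) : A \subset \cM,\ A = -A,\ A\ \text{compact},\ \gamma(A) \ge k\bigr\}, \quad k \in \N.
\]
Each $c_k$ is real by the previous paragraph, and symmetric compact sets of arbitrarily high genus are constructed from $\cO(N)$-symmetrized bumps supported on pairwise disjoint annuli and $L^2$-normalized. An equivariant deformation lemma on the $\cC^1$-manifold $\cM$ (in the style of Szulkin / Ghoussoub) then shows that each $c_k$ is a critical value of $J|_\cM$; moreover, if $c_k = c_{k+1}$ for some $k$, the critical set at that level has positive genus and hence contains infinitely many points, whereas if $c_k < c_{k+1}$ for all $k$ the critical values are already distinct. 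In either case, one gets infinitely many critical points of $J|_\cM$, promoted to solutions $(u,\lambda) \in W_{\cO(N)} \times \R$ of \eqref{eq}--\eqref{eq:mass} by symmetric criticality. The very same machinery applied on $\cS \cap \cX$ delivers the non-radial sequence, using that $\cX \cap W_{\cO(N)} = \{0\}$ to ensure the two families are different.

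The main obstacle I expect is the Palais--Smale condition at each level $c_k$. For a PS sequence $(u_n) \subset \cM$ with Lagrange multipliers $\la_n$, $H^1$-boundedness comes from coercivity; testing the perturbed equation $-\De u_n + \la_n u_n + a(u_n) = f(u_n) + o(1)$ against $u_n$ and using (A) together with (f1)--(f3) bounds $\la_n$; compactness of the radial embedding gives $u_n \to \bar u$ in $L^q(\R^N)$ for $q \in (2,2^*)$. The delicate point is the strong convergence $\int A(u_n) \to \int A(\bar u)$ and $\int a(u_n) u_n \to \int a(\bar u) \bar u$, which I would handle through a Brezis--Lieb splitting, the convexity of $s \mapsto a(s)s$, and the uniform integrability provided by the $\nabla_2$ condition. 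This is where the hypothesis (A) is used most essentially and is the principal technical novelty with respect to the classical $H^1$-framework.
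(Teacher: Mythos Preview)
Your overall architecture is close to the paper's, but you miss the single fact that makes the whole machine run: because $\lim_{s\to0}A(s)/s^2=\infty$, the space $W_\cO$ embeds \emph{compactly} into $L^2(\R^N)$, not only into $L^q(\R^N)$ for $q\in(2,2^*)$. Concretely, for every $\eps>0$ there is $\delta>0$ with $|s|^2\le\eps A(s)$ for $|s|\le\delta$ and $|s|^2\le\delta^{2-p}|s|^p$ otherwise, so a bounded sequence in $W_\cO$ that vanishes in $L^p$ for some $p\in(2,2^*)$ also vanishes in $L^2$. The paper isolates this as a Lions-type lemma and it is exactly what ``the hypothesis (A) is used most essentially'' for; the strong convergence of $\int A(u_n)$ that you flag as the delicate point is a routine consequence once $L^2$-compactness is in hand (Nehari identity plus Brezis--Lieb on $a(s)s$).

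Without this, your Palais--Smale argument has a real gap: from $u_n\weakto u$ and $u_n\to u$ only in $L^q$, $q\in(2,2^*)$, you cannot conclude $u\in\cS$, so the limit may fall off the constraint manifold and the deformation lemma at the minimax levels $c_k$ is not justified. Your workaround for the ground state---minimize on the disc and exclude $|\bar u|_2<\rho$ via $u_\sigma(x)=u(x/\sigma)$---does not obviously succeed either: here $J(u_\sigma)=\tfrac{\sigma^{N-2}}{2}|\nabla u|_2^2+\sigma^N\!\int A(u)-\sigma^N\!\int F(u)$ is not monotone in $\sigma$ in general, and there is no a priori sign information on $\inf_{\cD\cap W}J$ (unlike Sections~2--3, where negativity for large $\rho$ drives the argument). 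The paper instead gets the ground state by Ekeland plus the same Palais--Smale compactness, which immediately places the limit on $\cS$, and then identifies $\min_{\cS\cap W_{\cO(N)}}J=\min_{\cS\cap W}J$ via Schwarz rearrangement---a step you also omit.
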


By setting
\begin{equation*}
A(s) :=
\begin{cases}
- \alpha s^2 \ln s^2 & \text{if } 0 \le |s| < e^{-3},\\
3 \alpha s^2 + 4 \alpha e^{-3}|s| - \alpha e^{-6} & \text{if } |s| \ge e^{-3},
\end{cases}
\qquad
F(s) = \alpha s^2 \ln s^2 + A(s) + \frac{\mu}{p} |s|^p,
\end{equation*}
with $\alpha > 0$, and $\mu,p$ in the right range, we check that (A) and (f0)--(f3) hold and we recover the case \eqref{eq:log+p}.

\begin{Cor}\label{cor:corlog}
Let $g$ be given by \eqref{eq:log+p} with $\alpha > 0$, $\mu \geq 0$, and $2 < p \le 2+4/N$.
If \eqref{eq:eta-rho} holds, then there exists a solution $(u,\lambda) \in W\times \R$ to \eqref{eq}--\eqref{eq:mass} such that $J(u) = \min_{\cS\cap W}J$. 
\end{Cor}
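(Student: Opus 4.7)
The plan is to view Corollary \ref{cor:corlog} as a direct specialization of Theorem \ref{th:multip}: one decomposes the explicit nonlinearity \eqref{eq:log+p} as $g=f-a$ in such a way that (A) and (f0)--(f3) hold, and then extracts the solution $\bar u \in W_{\cO(N)}\subset W$ with $J(\bar u)=\min_{\cS\cap W}J$ from the conclusion of Theorem \ref{th:multip}. The decomposition is the one indicated in the text preceding the statement: take $A$ given by the displayed piecewise formula, set $a:=A'$, and then $F(s)=\alpha s^2\ln s^2+A(s)+\frac{\mu}{p}|s|^p$, so that $f:=F'$. The key feature of this choice is that the term $-a(s)$ cancels the logarithmic component of $g(s)$ for $|s|<e^{-3}$, so that $f(s)$ reduces essentially to the pure power $\mu|s|^{p-2}s$ near the origin and all the delicate behaviour at $0$ is repackaged into the $N$-function $A$.

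The verification of (A) proceeds in standard steps. The piecewise definition of $A$ is chosen so that values and derivatives match at $|s|=e^{-3}$, giving $A\in\cC^1(\R)$ globally. Convexity of $A$, i.e.\ $a'\ge 0$, holds on the logarithmic branch because the threshold $|s|<e^{-3}$ forces $\ln s^2<-6$, making $-\alpha\ln s^2$ dominate the bounded negative correction in $a'$; on the outer branch $a'\equiv 6\alpha>0$. The same threshold argument (computing $(a(s)s)''$ explicitly on the inner branch) shows that $s\mapsto a(s)s$ is convex on $(-e^{-3},e^{-3})$, while on the outer branch this map is a sum of a quadratic and a scaled absolute value, both convex. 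The $\Delta_2$ condition follows from $A(2s)/A(s)\to 4$ on both branches (the logarithm is subdominant against the $s^2$ factor), and $\nabla_2$ analogously from $A(\ell s)/(2\ell A(s))\to \ell/2>1$ for $\ell>2$. The limit $\lim_{s\to 0}A(s)/s^2=\infty$ is immediate from $A(s)/s^2=-\alpha\ln s^2$ on the logarithmic branch.

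For (f0)--(f3) the computations are even shorter. Oddness of $f$ is inherited from evenness of $F$. Near the origin $f(s)=\mu|s|^{p-2}s$, so $f_+(s)/s=\mu|s|^{p-2}\to 0$ since $p>2$, giving (f1). For (f2), splitting at $|s|=e^{-3}$: on the inner region $|f(s)|\lesssim|s|$, while on the outer region the logarithmic contribution is dominated by $|s|+|s|^{2^*-1}$ (or by the exponential upper bound when $N=2$) and the term $\mu|s|^{p-1}$ is dominated similarly because $p\le 2+4/N<2^*$. Assumption (f3), which restricts the nonlinearity to the mass-critical/subcritical regime, is where the range $p\le 2+4/N$ is used: one checks directly that $\eta=\mu/p$ when $p=2+4/N$ and $\eta=0$ when $p<2+4/N$, so in both cases \eqref{eq:eta} gives a finite number. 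The Gagliardo--Nirenberg-type smallness constraint \eqref{eq:eta-rho} is part of the hypotheses of the corollary itself, so nothing further is required.

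Once all the assumptions of Theorem \ref{th:multip} are verified, the statement of that theorem yields $\bar u\in W_{\cO(N)}$ and $\bar\lambda\in\R$ with $J(\bar u)=\min_{\cS\cap W}J$ and $(\bar u,\bar\lambda)$ solving \eqref{eq}--\eqref{eq:mass}; since $W_{\cO(N)}\subset W$, this $\bar u$ is the desired $u$ in the corollary. The main obstacle, such as it is, lies in the careful bookkeeping for $\Delta_2$, $\nabla_2$ and the convexity of $s\mapsto a(s)s$ \emph{globally} rather than merely at infinity, since the competition between $-\alpha\ln s^2$ and the bounded negative corrections near $0$ is tight and motivates the specific choice of the threshold $e^{-3}$ in the definition of $A$.
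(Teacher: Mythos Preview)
Your proposal is correct and follows exactly the route indicated in the paper: the paper does not give a separate proof of the corollary but simply states, right before Corollary~\ref{cor:corlog}, that with the displayed choice of $A$ and $F$ ``we check that (A) and (f0)--(f3) hold and we recover the case~\eqref{eq:log+p}'', after which the corollary is an immediate specialization of Theorem~\ref{th:multip}. Your sketch of the verifications (matching of values and derivatives at $|s|=e^{-3}$, convexity of $A$ and of $s\mapsto a(s)s$ via the threshold $\ln s^2<-6$, the $\Delta_2$/$\nabla_2$ limits, and the computation $\eta=\mu/p$ or $\eta=0$) is accurate and fills in precisely what the paper leaves to the reader.
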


In particular, we get a solution to \eqref{eq}--\eqref{eq:mass} with \eqref{eq:log+p}  for any $\rho>0$ provided that  $2<p<2+4/N$.
Moreover, (f3) allows us to handle terms $g$ that grow ``too fast'' to be included in the usual $H^1(\R^N)$ setting. For example, letting
\begin{equation*}
A(s) :=
\begin{cases}
\displaystyle -\alpha s^2 \ln s^2 & \text{if } 0 \le |s| < e^{-3},\\
\displaystyle \frac{10 \alpha}{p} e^{3p-6}|s|^p + 2 \alpha \left(3 - \frac5p\right) e^{-6} & \text{if } |s| \ge e^{-3},
\end{cases}
\end{equation*}
with $\alpha > 0$, we can allow $f_-(s) \approx |s|^{p-2} s$ for $|s| \gg 1$ for every $p > 2$, even when $N \ge 3$ and $p > 2^*$. At the same time, we can treat nonlinearities that are ``more singular'' at the origin, for example, letting
\begin{equation*}
A(s) := \frac1q |s|^q, \quad 1 < q < 2,
\end{equation*}
and taking any $f$ that satisfies (f0)--(f3). The price to pay for this different approach is that we need \eqref{eq:gfa} and (A) to construct a proper functional setting (i.e., an Orlicz one); this prevents us from dealing, e.g., with terms that behave like $1/\ln s^2$ near the origin, which instead are allowed by the use of the approximating problems to find least-energy solutions. On the other hand, we can weaken the assumptions about $f$, the ``remaining'' part of the nonlinearity: it can be controlled by $a$ instead of the usual conditions with an $H^1(\R^N)$-setting (in particular, we can deal with Sobolev-supercritical nonlinearities), we do not need $F > A$ anywhere, $f_+$ can have a mass-critical growth at infinity (provided $\rho$ is small), and there are no restrictions on $\rho$ if $f_+$ has a mass-subcritical growth at infinity; none of these scenarios can be handled with the previous approach. Except for \cite[Theorems 1.2 and 1.3]{Shuai_Yang}, where the nonlinear term has the very specific form \eqref{eq:log+p}, Theorem \ref{th:multip} seems to be the first multiplicity result when normalized solutions are sought in the strongly sublinear regime, and also the first time at all that more than two solutions are obtained.

The paper is structured al follows: in Section \ref{SecP}, we obtain least-energy solutions to the perturbed problem; in Section \ref{SecU}, we prove Theorems \ref{th:mainsub}, \ref{th:relcom}, \ref{th:nonex}, and Proposition \ref{pr:gsem}; in Section \ref{SecM}, we obtain infinitely many solutions to \eqref{eq}--\eqref{eq:mass}.

\section{Solutions to the perturbed problem}\label{SecP}

%\subsection{Ground states}
Recall from \eqref{eq:actionEPS} the definition of $J_\varepsilon \colon H^1(\R^N) \to \R$. The main purpose of this section is to prove the following result, where we denote $c_\varepsilon(\rho) := \inf_{\cD(\rho)} J_\varepsilon$.
\begin{Th}\label{th:pert_n}
Let $g$ satisfy (g0)--(g4). Then for every $\beta > 0$ there exists $\bar\rho > 0$ such that for every $\rho > \bar\rho$ and $\varepsilon > 0$ there exist $\lambda_\varepsilon > 0$ and $u_\varepsilon \in H^1(\R^N)$ such that $J_\varepsilon(u_\varepsilon) = c_\varepsilon(\rho) < -\beta$ and
\begin{equation}\label{eq:pert}
\begin{cases}
-\Delta u_\varepsilon + \lambda_\varepsilon u_\varepsilon = g_+(u_\eps) - \varphi_\eps(u_\eps) g_-(u_\varepsilon) \quad \text{in } \R^N\\
\int_{\R^N}|u_\varepsilon|^2\,dx=\rho^2.
\end{cases}
\end{equation}
Moreover, every such $u_\eps$ has constant sign and, up to a translation, is radial and radially monotonic.
\end{Th}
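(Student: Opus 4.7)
The plan is to minimize the $\cC^1$ functional $J_\varepsilon$ over the closed ball $\cD(\rho)$ directly. The first step is to guarantee that $c_\varepsilon(\rho)<-\beta$ uniformly in $\varepsilon$ once $\rho$ is large enough. Using (g4) I pick $\xi_0\ne 0$ with $G(\xi_0)>0$ and build a Berestycki--Lions-type competitor $\phi_R\in H^1(\R^N)\cap L^\infty$ equal to $\xi_0$ on $B_R$ and decaying smoothly to $0$ on $B_{R+1}\setminus B_R$, so that $|\phi_R|_2^2=O(R^N)$ while $J(\phi_R)=-G(\xi_0)|B_R|+O(R^{N-1})\to-\infty$; choosing $R$ so that $|\phi_R|_2\le\rho$ and letting $\rho\to\infty$ forces $J(\phi_R)<-\beta$. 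Since $0\le\varphi_\varepsilon\le 1$ and $sg_-(s)\ge 0$, pointwise $G_-^\varepsilon\le G_-$, hence $c_\varepsilon(\rho)\le J_\varepsilon(\phi_R)\le J(\phi_R)<-\beta$ for every $\varepsilon$. For the coercivity of $J_\varepsilon$ on $\cD(\rho)$, (g1) and (g3) together yield $G_+(s)\le C_\delta s^2+\delta|s|^{2+4/N}$ for every $\delta>0$, and the Gagliardo--Nirenberg inequality $|u|_{2+4/N}^{2+4/N}\le C|\nabla u|_2^{2}|u|_2^{4/N}$ produces
\[
J_\varepsilon(u)\ge\tfrac12|\nabla u|_2^2-C_\delta\rho^2-\delta C\rho^{4/N}|\nabla u|_2^2\quad\text{on }\cD(\rho);
\]
choosing $\delta$ small makes every minimizing sequence bounded in $H^1(\R^N)$.

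For the compactness step I run a Lions-style concentration-compactness argument. Vanishing of a minimizing sequence $(u_n)\subset\cD(\rho)$ would force $u_n\to 0$ in $L^{2+4/N}$, hence $\int G_+(u_n)\to 0$ up to a term $\le\delta\rho^2$, contradicting $J_\varepsilon(u_n)\to c_\varepsilon(\rho)<0$. After a translation $v_n(x)=u_n(x+y_n)$, I obtain $v_n\rightharpoonup v\ne 0$ in $H^1(\R^N)$. The Brezis--Lieb lemma applies to both $G_+$ and $G_-^\varepsilon$ (the growth is subcritical and $G_-^\varepsilon$ is tamed near $0$ by $\varphi_\varepsilon$), yielding
\[
J_\varepsilon(v_n)=J_\varepsilon(v)+J_\varepsilon(v_n-v)+o(1),\qquad |v_n|_2^2=|v|_2^2+|v_n-v|_2^2+o(1).
\]
Dichotomy is ruled out by the dilation $u\mapsto u(\mu\cdot)$ with $\mu=(\rho'/\rho)^{2/N}<1$, which carries $\cD(\rho')$ into $\cD(\rho)$ and satisfies
\[
J_\varepsilon(u(\mu\cdot))=\mu^{-N}J_\varepsilon(u)+\tfrac12\mu^{-N}(\mu^2-1)|\nabla u|_2^2;
\]
when $J_\varepsilon(u)<0$ both terms are strictly negative, whence $c_\varepsilon(\rho)<(\rho/\rho')^2 c_\varepsilon(\rho')$ and $|c_\varepsilon(\rho)|/\rho^2$ is strictly increasing wherever $c_\varepsilon<0$. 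Combined with the Brezis--Lieb lower bound $c_\varepsilon(\rho)\ge c_\varepsilon(|v|_2)+c_\varepsilon(\lim|v_n-v|_2)$ (and the convention $c_\varepsilon(0)=0$), this forces $|v_n-v|_2\to 0$ and $|v|_2=\rho$; passing to the limit in the $J_\varepsilon$-identity upgrades this to strong convergence in $H^1(\R^N)$, producing a minimizer $u_\varepsilon:=v\in\cS(\rho)$.

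The Lagrange multiplier rule on the $\cC^1$ manifold $\cS(\rho)$ now delivers $\lambda_\varepsilon\in\R$ with $J_\varepsilon'(u_\varepsilon)=-\lambda_\varepsilon u_\varepsilon$, i.e., \eqref{eq:pert}, and the Poho\v{z}aev identity for the perturbed equation (applicable by the subcritical growth of $g_\varepsilon=g_+-\varphi_\varepsilon g_-$, exactly as in Remark \ref{rem:sign_lam}) together with $J_\varepsilon(u_\varepsilon)=c_\varepsilon(\rho)<0$ gives $\tfrac{N\lambda_\varepsilon}{2}\rho^2=|\nabla u_\varepsilon|_2^2-Nc_\varepsilon(\rho)>0$, hence $\lambda_\varepsilon>0$. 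The radial monotonicity follows from Schwarz symmetrization applied to a nonnegative (respectively nonpositive) minimizer, using that $u^*$ preserves $|u|_2$ and $\int F(u)\,dx$ for every measurable $F\ge 0$ while possibly decreasing $|\nabla u|_2$. The hard part I foresee is the assertion that \emph{every} minimizer has constant sign: because $G$ is not assumed even, $u\mapsto|u|$ does not directly lower $J_\varepsilon$ when $u$ changes sign. My plan here is to decompose $J_\varepsilon(u)=\J_+(u^+)+\J_-(u^-)$ along the two sign branches (the gradient and $L^p$-pieces split by the disjointness of supports), establish the same strict scaling monotonicity for each of the associated sign-restricted infima $c_\varepsilon^{\pm}(\rho):=\inf\{\J_\pm(v):v\ge 0,\ |v|_2\le\rho\}$, and show that any competitor with both $u^+\not\equiv 0$ and $u^-\not\equiv 0$ is strictly beaten, after rescaling to the sphere, by one of $\pm|u|$; combined with Schwarz symmetrization this yields the constant-sign, radially monotonic conclusion.
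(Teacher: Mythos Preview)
Your argument is essentially correct and parallel to the paper's, with two organizational differences worth noting. For compactness, you establish the strict monotonicity of $\rho\mapsto c_\varepsilon(\rho)/\rho^2$ on $\{c_\varepsilon<0\}$ directly from the dilation $u\mapsto u(\mu\cdot)$ (using that any minimizing sequence at a negative level has $|\nabla u_n|_2$ bounded away from $0$) and conclude both $|v_n-v|_2\to0$ and $|v|_2=\rho$ in one stroke. The paper instead runs two successive profile decompositions (Lemma~\ref{lem:relcom}) together with a strict-subadditivity lemma (Lemma~\ref{lem:strict}) that needs one of the sublevels to be \emph{attained}; this yields only $L^p$-convergence for $p\in(2,2^*)$, and $u_\varepsilon\in\cS(\rho)$ is obtained \emph{a posteriori}: the minimizer over $\cD(\rho)$ is a critical point with multiplier $\lambda_\varepsilon$ ($=0$ if it lies in the interior), and Poho\v{z}aev with $c_\varepsilon(\rho)<0$ rules out $\lambda_\varepsilon\le0$. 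Your route is more direct here.

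On the qualitative side there is one genuine gap and one simplification. Your constant-sign plan is heavier than needed: no sign-restricted infima $c_\varepsilon^\pm$ are required, since $c_\varepsilon(\rho)=J_\varepsilon(u)=J_\varepsilon(u^+)+J_\varepsilon(-u^-)\ge c_\varepsilon(|u^+|_2)+c_\varepsilon(|u^-|_2)$ and the \emph{same} strict subadditivity you already proved (one summand must be negative because their sum is $\le c_\varepsilon(\rho)<0$) gives a contradiction. The gap is in the radial statement: Schwarz symmetrization only shows that \emph{some} minimizer is radially decreasing, whereas the theorem asserts this for \emph{every} minimizer. For that you need the equality case in the P\'olya--Szeg\H{o} inequality (Brothers--Ziemer) together with the information that $|\{\nabla u_\varepsilon=0\}\cap\{0<u_\varepsilon<\mathrm{ess\,sup}\,u_\varepsilon\}|=0$, or, as the paper does, invoke \cite{JL_min} (see also \cite{Maris}).
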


We begin by proving a series of lemmas.

\begin{Lem}\label{lem:coerbb}
If $g$ satisfies (g0)--(g3),
%\begin{equation*}%\label{eq:sub}
%\eta:=\limsup_{|s|\to \infty}\frac{G_+(s)}{|s|^{2+4/N}}
%\end{equation*}
%is such that
%\begin{equation}\label{eq:etainfsub}
%2C_{N,2_\#}^{2_\#}\eta\rho^{4/N}<1,
%\end{equation}
then $J_\varepsilon|_\cD$ is coercive and bounded below.
\end{Lem}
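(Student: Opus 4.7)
The plan is to discard the nonnegative term of $J_\varepsilon$ involving $G_-^\varepsilon$ and then absorb $\int_{\R^N} G_+(u)\,dx$ into a small fraction of $|\nabla u|_2^2$ via the Gagliardo--Nirenberg inequality. Using that $g_+(s) = \max\{g(s),0\}$ for $s\ge 0$ and $g_+(s) = \min\{g(s),0\}$ for $s\le 0$, a direct case analysis on the definition $g_- = g_+ - g$ yields $s\,g_-(s)\ge 0$ on $\R$. Since $\varphi_\varepsilon \ge 0$, the integrand $\varphi_\varepsilon(t)g_-(t)$ in $G_-^\varepsilon$ has the same sign as $t$, so $G_-^\varepsilon(s)\ge 0$ for every $s\in\R$. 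Consequently
\[
J_\varepsilon(u) \ge \frac{1}{2}|\nabla u|_2^2 - \int_{\R^N} G_+(u)\,dx,
\]
and the estimate is reduced to a bound, uniform in $\varepsilon$, on $\int_{\R^N} G_+(u)\,dx$ for $u\in\cD$.

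Second, I would establish that for every $\delta>0$ there exists $M_\delta>0$ such that
\[
G_+(s) \le M_\delta |s|^2 + \delta\,|s|^{2+4/N} \quad \text{for all } s \in \R.
\]
Assumption (g3) supplies $R>0$ for which the second term dominates on $|s|\ge R$; on the complementary range, (g1) forces $G_+(s)/|s|^2\to 0$ as $s\to 0$, while the continuity of $G_+$ inherited from (g0) keeps $G_+(s)/|s|^2$ bounded on $[-R,R]\setminus\{0\}$, yielding the constant $M_\delta$. Integrating the pointwise bound and invoking \eqref{eq:GN} with $p=2+4/N$, for which the exponent of $|\nabla u|_2$ is exactly $2$ and that of $|u|_2$ is $4/N$, gives, for $u\in\cD$,
\[
\int_{\R^N} G_+(u)\,dx \le M_\delta \rho^2 + \delta\,C_{N,2+4/N}^{2+4/N}\,\rho^{4/N}\,|\nabla u|_2^2.
\]

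Finally, since $\rho$ is fixed, I would choose $\delta$ small enough that $\delta\,C_{N,2+4/N}^{2+4/N}\rho^{4/N}\le 1/4$. Combining the two previous displays,
\[
J_\varepsilon(u) \ge \frac{1}{4}|\nabla u|_2^2 - M_\delta \rho^2 \quad \text{for all } u \in \cD,
\]
which is both a uniform lower bound and, together with $\|u\|_{H^1}^2 \le \rho^2 + |\nabla u|_2^2$ on $\cD$, the claimed coercivity. The only mildly delicate step is the finiteness of $M_\delta$ in the pointwise bound; it rests on combining (g1) with the continuity of $G_+$ on $[-R,R]\setminus\{0\}$. Assumption (g2) plays no explicit role here, the mass-subcritical growth from (g3) being sufficient for the tail.
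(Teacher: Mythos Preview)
Your proof is correct and follows essentially the same approach as the paper: drop the nonnegative $G_-^\varepsilon$ term, use (g1) and (g3) to bound $G_+(s)\le C_\delta|s|^2+\delta|s|^{2+4/N}$, apply the Gagliardo--Nirenberg inequality, and absorb the gradient term by choosing $\delta$ small. Your additional remarks (the explicit verification that $sg_-(s)\ge 0$ and the observation that (g2) plays no role) are correct elaborations that the paper leaves implicit.
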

\begin{proof}
From (g1) and (g3), for every $\delta > 0$ there exists $C_\delta > 0$ such that for every $s \in \R$
\[
G_+(s) \le C_\delta |s|^2 + \delta |s|^{2+4/N}.
\]
Consequently, from \eqref{eq:GN}, for every $u \in \cD$ there holds
\[
\begin{split}
J_\varepsilon(u) & \ge \int_{\R^N} \frac12 |\nabla u|^2 - G_+(u) \, dx \ge \frac12 |\nabla u|_2^2 - C_\delta |u|_2^2 - \delta |u|_{2+4/N}^{2+4/N}\\
& \ge \frac12 |\nabla u|_2^2 - C_\delta \rho^2 - \delta C_{N,2+4/N}^{2+4/N} \rho^{4/N} |\nabla u|_2^2.
\end{split}
\]
Taking $\delta < \left(2 C_{N,2+4/N}^{2+4/N} \rho^{4/N}\right)^{-1}$ we conclude.
\end{proof}

%The following lemmas are inspired by \cite{BerLionsII} and \cite{JeanjeanLu:norm}.
%Note that, for every $\varepsilon\in(0,1)$, $G_-^\varepsilon\le G_-$, which implies
%\begin{equation}\label{eq:epsestim}
%G^\varepsilon\ge G \quad \text{and} \quad |G^\varepsilon|\le|G|.
%\end{equation}
%In fact, for every fixed $s\in\R$, $\varepsilon\mapsto G_-^\varepsilon(s)$ is non-increasing, which in turn implies
%\[
%\varepsilon<\varepsilon'\Rightarrow J\ge J_\varepsilon\ge J_{\varepsilon'}.
%\]
%For $\varepsilon,\rho>0$, let us denote $c_\varepsilon(\rho):=\inf_{\cD(\rho)}J_\varepsilon$ and $c(\rho) = \inf_{\cD(\rho)}J$.
%Moreover, for $s>0$ and $u \in H^1(\R^N)$ define $s\star u := s^{N/2}u(s\cdot)$. Note that $|s\star u|_2 = |u|_2$.

\begin{Lem}\label{lem:neg}
If $g$ satisfies (g0)--(g2) and (g4), then for every $\beta>0$ there exists $\bar{\rho}>0$ such that
\[
c_\eps(\rho) < -\beta
\]
for every $\rho>\bar{\rho}$ and every $\varepsilon\in(0,1)$.
\end{Lem}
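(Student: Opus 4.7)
The plan is to find, for each $\beta > 0$, a single compactly supported, bounded test function $w \in H^1(\R^N)$ whose energy $J_\varepsilon(w)$ is bounded above by $-\beta$ uniformly in $\varepsilon \in (0,1)$; the threshold $\bar\rho$ is then just $|w|_2$.

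First I would decouple the $\varepsilon$-dependence by establishing the uniform upper bound
\begin{equation*}
J_\varepsilon(u) \le \frac12 |\nabla u|_2^2 - \int_{\R^N} G(u)\, dx
\end{equation*}
for any bounded, compactly supported $u$. This follows from $\varphi_\varepsilon \le 1$ together with a short sign check on $g_-$: from the definition of $G_+$ one reads off $g_-(s) \ge 0$ for $s > 0$ and $g_-(s) \le 0$ for $s < 0$, so $\varphi_\varepsilon g_-$ and $g_-$ always have the same sign. Integrating from $0$ yields $0 \le G_-^\varepsilon \le G_-$ pointwise, and the identity $G_- = G_+ - G$ turns the definition of $J_\varepsilon$ into the displayed bound.

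Next I would use (g4) to build a Berestycki--Lions-style ``plateau'' function. Fix $\xi_0$ with $G(\xi_0)>0$, and for $r \ge 1$ set
\begin{equation*}
u_r(x) := \xi_0 \cdot \min\bigl\{1,\max\{0,r+1-|x|\}\bigr\},
\end{equation*}
so that $u_r \equiv \xi_0$ on $B_r$, $u_r$ decays linearly to $0$ on the annulus $B_{r+1}\setminus B_r$, and $u_r \equiv 0$ outside $B_{r+1}$. Elementary volume estimates give $|\nabla u_r|_2^2 \le C_N\,\xi_0^2\, r^{N-1}$ and, using continuity of $G$ on $[-|\xi_0|,|\xi_0|]$,
\begin{equation*}
\int_{\R^N} G(u_r)\, dx \ge G(\xi_0)\,\omega_N\, r^N - C'_N r^{N-1},
\end{equation*}
where $\omega_N := |B_1|$. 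Combining with the first step yields
\begin{equation*}
J_\varepsilon(u_r) \le C''_N r^{N-1} - G(\xi_0)\,\omega_N\, r^N,
\end{equation*}
uniformly in $\varepsilon \in (0,1)$, and the right-hand side tends to $-\infty$ as $r \to \infty$ because $G(\xi_0) > 0$ and $N \ge 2$.

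Finally, given $\beta > 0$, I would pick $r_\beta$ so large that the right-hand side above falls below $-\beta$, and set $\bar\rho := |u_{r_\beta}|_2$. For any $\rho > \bar\rho$, the function $u_{r_\beta}$ lies in $\cD(\rho)$ and hence $c_\varepsilon(\rho) \le J_\varepsilon(u_{r_\beta}) < -\beta$. The only subtle point is the sign analysis of $g_-$ that kills the $\varepsilon$-dependence in the energy upper bound; everything after that is the standard Berestycki--Lions construction, and no growth control on $g$ is needed since the test function is bounded with compact support.
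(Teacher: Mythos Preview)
Your proof is correct and follows essentially the same strategy as the paper: both use the inequality $G_-^\varepsilon \le G_-$ (equivalently $G^\varepsilon \ge G$) to remove the $\varepsilon$-dependence and then invoke the Berestycki--Lions plateau construction based on (g4). The only cosmetic difference is that the paper fixes one function $u$ with $\int_{\R^N} G^\varepsilon(u)\,dx \ge 1$ and dilates it onto $\cS(\rho)$, whereas you let the plateau radius $r$ grow and place the resulting test function in the disc $\cD(\rho)$; both devices achieve the same $r^N$-versus-$r^{N-1}$ (or $\rho^2$-versus-$\rho^{2-4/N}$) competition that drives the energy to $-\infty$.
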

\begin{proof}
Arguing as in \cite[Proof of Theorem 2]{BerLions} and observing that $G^\varepsilon := G_+ - G_-^\varepsilon \ge G$, there exists $u \in H^1(\R^N)$ such that $\int_{\R^N} G^\eps(u) \, dx \ge 1$ for every $\eps \in (0,1)$. Define $u_\rho := u(|u|_2^{2/N}\rho^{-2/N}\cdot) \in \cS(\rho)$. There holds
\[\begin{split}
J_\eps(u_\rho) & = \int_{\R^N} \frac12 |\nabla u_\rho|^2 - G^\eps(u_\rho) \, dx = \frac{|\nabla u|_2^2}{2|u|_2^{2(N-2)}} \rho^{2 - 4/N} - \frac{1}{|u|_2^2} \int_{\R^N} G^\eps(u) \, dx \rho^2\\
& \le \frac{|\nabla u|_2^2}{2|u|_2^{2(N-2)}} \rho^{2 - 4/N} - \frac{1}{|u|_2^2} \rho^2 \to -\infty
\end{split}\]
as $\rho \to \infty$, whence the statement.
\end{proof}

\begin{Lem}\label{lem:subadd}
If (g0)--(g3) hold, then for every $\rho_1,\rho_2>0$ there holds $c_\varepsilon(\sqrt{\rho_1^2+\rho_2^2})\le c_\varepsilon(\rho_1)+c_\varepsilon(\rho_2)$.
\end{Lem}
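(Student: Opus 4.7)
The plan is the classical \emph{translation with disjoint supports} argument, which is especially clean here because we work on the $L^2$-disc $\cD$ rather than the sphere $\cS$: reducing the mass of a test function can only help, so we do not have to restore it afterwards. Fix $\delta > 0$ and pick $u_i \in \cD(\rho_i)$ for $i = 1,2$ with $J_\varepsilon(u_i) < c_\varepsilon(\rho_i) + \delta$.

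Next I would make each $u_i$ compactly supported by truncation. Taking a smooth radial cut-off $\chi_R$ equal to $1$ on $B_R$ and to $0$ outside $B_{2R}$, set $u_i^R := \chi_R u_i$. A routine dominated-convergence check gives $u_i^R \to u_i$ in $H^1(\R^N)$ as $R \to \infty$, so by the $\cC^1$-regularity of $J_\varepsilon$ on $H^1(\R^N)$ one has $J_\varepsilon(u_i^R) \to J_\varepsilon(u_i)$. Since truncation can only shrink the $L^2$-norm, $u_i^R \in \cD(\rho_i)$.

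Then I would push the second piece far away: pick $y \in \R^N$ with $|y| > 4R$, so that $\supp u_1^R$ and $\supp u_2^R(\cdot - y)$ are disjoint, and set $v := u_1^R + u_2^R(\cdot - y)$. Because the two summands never share a point in their supports and both $G_+$ and $G_-^\varepsilon$ vanish at $0$, the $L^2$-mass and the functional split \emph{exactly}:
\[
|v|_2^2 = |u_1^R|_2^2 + |u_2^R|_2^2 \le \rho_1^2 + \rho_2^2, \qquad J_\varepsilon(v) = J_\varepsilon(u_1^R) + J_\varepsilon(u_2^R).
\]
Hence $v \in \cD\bigl(\sqrt{\rho_1^2 + \rho_2^2}\bigr)$, and letting first $R \to \infty$ and then $\delta \to 0$ yields
\[
c_\varepsilon\bigl(\sqrt{\rho_1^2+\rho_2^2}\bigr) \le J_\varepsilon(v) \longrightarrow J_\varepsilon(u_1) + J_\varepsilon(u_2) \le c_\varepsilon(\rho_1) + c_\varepsilon(\rho_2) + 2\delta,
\]
which is the claim.

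There is no real obstacle. The argument rests on two ingredients already in place: $J_\varepsilon$ is genuinely $\cC^1$ on $H^1(\R^N)$ (which is exactly why the regularisation $\varphi_\varepsilon$ was introduced), and membership in $\cD(\rho)$ is preserved by any operation that decreases the $L^2$-norm. Had we been forced to work on $\cS(\rho)$, a rescaling step would be needed to restore the mass exactly, producing a small extra correction to the energy bound; the disc formulation bypasses that subtlety entirely.
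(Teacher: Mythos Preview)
Your argument is correct and follows essentially the same route as the paper: choose near-minimizers in $\cD(\rho_i)$, approximate by compactly supported functions (the paper invokes density of $\cC_0^\infty(\R^N)$ directly rather than an explicit cutoff, but this is cosmetic), translate to disjoint supports, and use additivity of $J_\varepsilon$ together with the fact that the disc $\cD$ is closed under operations that do not increase the $L^2$-norm. Your remark that the disc formulation spares a rescaling step is exactly the point.
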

\begin{proof}
Fix $\delta>0$. From the definition of $c_\varepsilon$, which is finite due to Lemma \ref{lem:coerbb}, there exist $u_1,u_2\in\cC_0^\infty(\R^N)$ such that $|u_i|_2\le\rho_i$ and $J_\varepsilon(u_i)\le c_\varepsilon(\rho_i)+\delta$, $i=1,2$. In virtue of the translation invariance, we can assume that $u_1$ and $u_2$ have disjoint supports. Then $|u_1+u_2|_2^2 = |u_1|_2^2 + |u_2|_2^2 \le \rho_1^2 + \rho_2^2$ and so
\[
c_\varepsilon\left(\sqrt{\rho_1^2+\rho_2^2}\right) \le J_\varepsilon(u_1+u_2)\le c_\varepsilon(\rho_1)+c_\varepsilon(\rho_2)+2\delta.\qedhere
\]
\end{proof}

The following lemma is inspired from \cite[Lemma 3.2 (ii)]{Shibata_2014}.

\begin{Lem}\label{lem:strict}
Assume (g0)--(g3) hold and let $\rho_1,\rho_2 > 0$. If there exist $u_i\in\cD(\rho_i)$ such that $J_\varepsilon(u_i) = c_\varepsilon(\rho_i)$,  $i=1,2$, and $(u_1,u_2) \ne (0,0)$, then $c_\eps(\sqrt{\rho_1^2+\rho_2^2}) < c_\varepsilon(\rho_1) + c_\varepsilon(\rho_2)$.
\end{Lem}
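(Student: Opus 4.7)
The plan is to construct an explicit dilated competitor that beats $c_\varepsilon(\rho_1) + c_\varepsilon(\rho_2)$. Set $R := \sqrt{\rho_1^2 + \rho_2^2}$ and, without loss of generality, $u_1 \neq 0$.

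First I would establish a structural dichotomy: every nonzero minimizer $u$ of $J_\varepsilon$ on $\cD(\rho)$ satisfies $|u|_2 = \rho$ with associated Lagrange multiplier $\lambda > 0$. If $|u|_2 < \rho$, then $u$ is an interior minimizer of the $\cC^1$ functional $J_\varepsilon$, hence a free critical point; the Poho\v{z}aev identity---which holds for the perturbed equation $-\Delta u + \lambda u = g_+(u) - \varphi_\varepsilon(u) g_-(u)$ by the argument of \cite[Proposition~3.1]{MederskiBL2}, the perturbed nonlinearity being Lipschitz near $0$ and satisfying (g0)--(g3)---yields $J_\varepsilon(u) = \tfrac{1}{N}|\nabla u|_2^2 \geq 0$, contradicting $J_\varepsilon(u) = c_\varepsilon(\rho) \leq J_\varepsilon(0) = 0$ together with $u \neq 0$. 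Hence $|u|_2 = \rho$; Poho\v{z}aev then gives $J_\varepsilon(u) = \tfrac{1}{N}|\nabla u|_2^2 - \tfrac{\lambda}{2}\rho^2 \leq 0$, forcing $\lambda > 0$. In particular $|u_1|_2 = \rho_1$ with $\lambda_1 > 0$, and if $u_2 \neq 0$ then $|u_2|_2 = \rho_2$ with $\lambda_2 > 0$.

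Next, for $u_i \neq 0$ set $\sigma_i := R^2/\rho_i^2 > 1$ and $v_i(x) := u_i(x/\sigma_i^{1/N}) \in \cS(R) \subset \cD(R)$. A change of variables combined with Poho\v{z}aev for $u_i$ simplifies the difference to
\[
J_\varepsilon(v_i) - J_\varepsilon(u_i) = \frac{|\nabla u_i|_2^2}{2}\,h(\sigma_i) - \frac{\lambda_i\,\rho_{3-i}^2}{2},
\]
where $h(\sigma) := \sigma^{1-2/N} - 1 - (\sigma-1)(N-2)/N$ is strictly concave with $h(1) = h'(1) = 0$ when $N \geq 3$ (and $h \equiv 0$ when $N = 2$), so $h \leq 0$ on $(0,\infty)$.

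Finally, if $u_2 = 0$ then $c_\varepsilon(\rho_2) = 0$ and the bound $c_\varepsilon(R) \leq J_\varepsilon(v_1)$ combined with $\lambda_1, \rho_2 > 0$ already gives $c_\varepsilon(R) < c_\varepsilon(\rho_1) + c_\varepsilon(\rho_2)$. If $u_2 \neq 0$, relabel so that $\lambda_1 \geq \lambda_2$ and substitute the Poho\v{z}aev expression $c_\varepsilon(\rho_2) = \tfrac{1}{N}|\nabla u_2|_2^2 - \tfrac{1}{2}\lambda_2 \rho_2^2$ into $c_\varepsilon(R) \leq J_\varepsilon(v_1)$ to obtain
\[
c_\varepsilon(R) - c_\varepsilon(\rho_1) - c_\varepsilon(\rho_2) \leq \frac{|\nabla u_1|_2^2}{2}\,h(\sigma_1) + \frac{(\lambda_2 - \lambda_1)\rho_2^2}{2} - \frac{|\nabla u_2|_2^2}{N} < 0,
\]
with strict inequality ensured by $-\tfrac{1}{N}|\nabla u_2|_2^2 < 0$ while the first two summands are $\leq 0$. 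The main technical subtlety I foresee is justifying the Poho\v{z}aev identity for both interior and constrained minimizers of the perturbed problem; since the perturbed nonlinearity has linear growth near $0$ and mass-subcritical growth at infinity, this reduces to elliptic regularity and a standard multiplier argument as in \cite[Proposition~3.1]{MederskiBL2}.
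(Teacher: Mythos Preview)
Your argument is correct, but it takes a considerably more elaborate route than the paper's. The paper never invokes the Poho\v{z}aev identity, Lagrange multipliers, or the fact that nonzero minimizers lie on the sphere; it uses only the elementary dilation inequality
\[
c_\varepsilon(\sqrt{s}\,\rho_i) \le J_\varepsilon\bigl(u_i(s^{-1/N}\cdot)\bigr) = s\Bigl(\tfrac{1}{2s^{2/N}}|\nabla u_i|_2^2 - \textstyle\int G^\varepsilon(u_i)\Bigr) \le s\,c_\varepsilon(\rho_i),
\]
which holds for any $u_i \in \cD(\rho_i)$ with $J_\varepsilon(u_i)=c_\varepsilon(\rho_i)$ and is strict whenever $s>1$ and $u_i\ne0$ (simply because $|\nabla u_i|_2>0$). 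A short case split on whether $\rho_1\ge\rho_2$ or $\rho_1<\rho_2$ then yields the conclusion in about five lines. In contrast, your proof first establishes that nonzero minimizers saturate the constraint with $\lambda_i>0$ (which needs Poho\v{z}aev and hence regularity for the perturbed equation), then computes $J_\varepsilon(v_i)-J_\varepsilon(u_i)$ via Poho\v{z}aev, and finally orders the Lagrange multipliers to close the estimate. What your approach buys is a more explicit, quantitative bound on the gap $c_\varepsilon(R)-c_\varepsilon(\rho_1)-c_\varepsilon(\rho_2)$ in terms of $|\nabla u_i|_2$ and $\lambda_i$; what the paper's approach buys is that it is entirely self-contained at this point---no Euler--Lagrange equation, no Poho\v{z}aev, no need to know $|u_i|_2=\rho_i$---so the lemma stands independently of the structural results you import.
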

\begin{proof}
For every $s\ge1$ and $i=1,2$ there holds
\[
c_\varepsilon(\sqrt{s} \rho_i) \le J_\varepsilon\bigl(u_i(s^{-1/N}\cdot)\bigr)=s\left(\frac1{2s^{2/N}}\int_{\R^N}|\nabla u_i|^2\,dx-\int_{\R^N}G^\varepsilon(u_i)\,dx\right)\le sJ_\varepsilon(u_i)=sc_\varepsilon(\rho_i).
\]
Moreover, if $s>1$ and $u_i \ne 0$, then $c_\varepsilon(\sqrt{s} \rho_i)<sc_\varepsilon(\rho_i)$. Let us assume, without loss of generality, that $u_1 \ne 0$. If $\rho_1\ge\rho_2$, then
\[
c_\varepsilon\left(\sqrt{\rho_1^2+\rho_2^2}\right) < \frac{\rho_1^2+\rho_2^2}{\rho_1^2} c_\varepsilon(\rho_1) = c_\varepsilon(\rho_1) + \frac{\rho_2^2}{\rho_1^2} c_\varepsilon(\rho_1) \le c_\varepsilon(\rho_1) + c_\varepsilon(\rho_2).
\]
If $\rho_1<\rho_2$, then
\[
c_\varepsilon\left(\sqrt{\rho_1^2+\rho_2^2}\right) \le \frac{\rho_1^2+\rho_2^2}{\rho_2^2} c_\varepsilon(\rho_2) = \frac{\rho_1^2}{\rho_2^2} c_\varepsilon(\rho_2) + c_\varepsilon(\rho_2) < c_\varepsilon(\rho_1) + c_\varepsilon(\rho_2).\qedhere
\]
\end{proof}

\begin{Rem}\label{rem:subadd}
(a) Lemmas \ref{lem:subadd} and \ref{lem:strict} still hold if $J_\eps$ is replaced with $J$.\\
(b) As proved in \cite[Lemma 2.5 (ii)]{Schino}, Lemma \ref{lem:strict} holds under the weaker assumption that $J_\eps(u_1) = c_\eps(\rho_1)$ or $J_\eps(u_2) = c_\eps(\rho_2)$.
\end{Rem}

\begin{Lem}\label{lem:relcom}
Let $g$ satisfy (g0)--(g4). Then, for every $\beta>0$, there exists $\bar\rho>0$ such that for every $\rho>\bar\rho$ and $\varepsilon>0$ we have $c_\varepsilon(\rho) < -\beta$ and, if $0 < \rho_n \to \rho$ and $u_n \in \cD(\rho_n)$ is such that $J_\eps(u_n) \to c_\eps(\rho)$, then there exists $u \in \cD(\rho) \setminus \{0\}$ such that, up to translations, $u_n \to u$ in $L^p(\R^N)$, $2<p<2^*$.
\end{Lem}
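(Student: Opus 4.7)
The first assertion is immediate from Lemma~\ref{lem:neg} applied with the given $\beta$; fix $\bar\rho$ accordingly. For the compactness claim, the plan is a standard concentration-compactness scheme: the $H^1$-bound comes from Lemma~\ref{lem:coerbb}, vanishing is excluded by the negativity of $c_\eps$, and the weak limit is shown to capture all the mass via a Brezis--Lieb splitting combined with the (strict) subadditivity of Lemmas~\ref{lem:subadd}--\ref{lem:strict} and Remark~\ref{rem:subadd}(b).

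First I would extract a weak limit $u_n \weakto u_\infty$ in $H^1(\R^N)$ and rule out vanishing. If $\sup_{y \in \R^N} \int_{B(y,1)} |u_n|^2 \, dx \to 0$, then Lions' lemma gives $u_n \to 0$ in $L^p(\R^N)$ for every $p \in (2, 2^*)$. Combining (g1), (g3) and the continuity of $G_+$, for any $\delta > 0$ and any fixed $p \in (2, 2+4/N)$ one has
\[
G_+(s) \le \delta\bigl(|s|^2 + |s|^{2+4/N}\bigr) + C_{\delta,p} |s|^p \quad \text{for all } s \in \R,
\]
so \eqref{eq:GN} and the $L^2$-constraint force $\int_{\R^N} G_+(u_n) \, dx \to 0$; the two-dimensional case uses the analogous Trudinger--Moser-based bound coming from (g2). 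Since $G_-^\eps \ge 0$, this would give $\liminf J_\eps(u_n) \ge 0 > c_\eps(\rho)$, a contradiction. Hence there exist $y_n \in \R^N$ and $\eta > 0$ with $\int_{B(y_n,1)} |u_n|^2 \, dx \ge \eta$, and after translation $\tilde u_n := u_n(\cdot - y_n) \weakto u$ with $u \ne 0$.

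Set $\rho_0 := |u|_2 \in (0, \rho]$ and $v_n := \tilde u_n - u \weakto 0$, so $|v_n|_2^2 \to \rho^2 - \rho_0^2$. Using $G_-^\eps(s) \le c_\eps |s|^2$ for $|s| \le 1$ together with the growth at infinity from (g2)--(g3) and almost-everywhere convergence, the Brezis--Lieb lemma applied to each piece gives
\[
J_\eps(\tilde u_n) = J_\eps(u) + J_\eps(v_n) + o(1).
\]
Suppose for contradiction that $\rho_0 < \rho$. Rescale $\hat v_n := \alpha_n v_n$ with $\alpha_n := \sqrt{\rho^2-\rho_0^2}/|v_n|_2 \to 1$, so $\hat v_n \in \cS(\sqrt{\rho^2-\rho_0^2})$; since $J_\eps \in \cC^1(H^1(\R^N))$ and $(\alpha_n-1)v_n \to 0$ in $H^1(\R^N)$, one has $J_\eps(\hat v_n) = J_\eps(v_n) + o(1) \ge c_\eps(\sqrt{\rho^2-\rho_0^2})$. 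Combining with $J_\eps(u) \ge c_\eps(\rho_0)$ and Lemma~\ref{lem:subadd}, the chain
\[
c_\eps(\rho) \ge J_\eps(u) + c_\eps(\sqrt{\rho^2-\rho_0^2}) \ge c_\eps(\rho_0) + c_\eps(\sqrt{\rho^2-\rho_0^2}) \ge c_\eps(\rho)
\]
collapses to equalities, so in particular $J_\eps(u) = c_\eps(\rho_0)$. Because $u \ne 0$, Remark~\ref{rem:subadd}(b) then yields the strict inequality $c_\eps(\rho) < c_\eps(\rho_0) + c_\eps(\sqrt{\rho^2-\rho_0^2})$, contradicting the equalities above. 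Therefore $|v_n|_2 \to 0$, and interpolating with the uniform $L^{2^*}$-bound (or the $L^q$-bound for arbitrary $q < \infty$ when $N = 2$) yields $\tilde u_n \to u$ in $L^p(\R^N)$ for every $p \in (2, 2^*)$.

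The main technical hurdle I anticipate is the Brezis--Lieb splitting of the nonlinear terms $\int G_\pm^\eps(\cdot)$: the small-$s$ bound $G_-^\eps(s) \le c_\eps |s|^2$ must be combined with the infinity-growth controls in (g2)--(g3) to produce an integrable pointwise majorant, and the case $N = 2$ requires a separate Trudinger--Moser argument. A secondary subtlety is that the scheme genuinely uses the disc $\cD$ in place of the sphere $\cS$ together with Remark~\ref{rem:subadd}(b), so that the single-minimizer version of strict subadditivity is available and one does not need to locate an actual minimizer at the complementary level $\sqrt{\rho^2-\rho_0^2}$.
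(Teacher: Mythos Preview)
Your overall strategy is sound and, once a technical slip is fixed, gives a shorter argument than the paper's. The paper does \emph{not} aim for $|v_n|_2 \to 0$; instead it shows that $v_n := \tilde u_n - u$ vanishes in Lions' sense by extracting a \emph{second} concentration profile $v \ne 0$, splitting once more via Brezis--Lieb into a remainder $w_n$, and then running the subadditivity chain so that \emph{both} identities $J_\eps(u) = c_\eps(|u|_2)$ and $J_\eps(v) = c_\eps(|v|_2)$ fall out of the collapsed inequalities, whence the two-minimizer Lemma~\ref{lem:strict} applies. Your route bypasses the second profile entirely by invoking the one-minimizer version in Remark~\ref{rem:subadd}(b), and as a bonus yields $L^2$-convergence, which the paper recovers only later (Remark~\ref{rem:rcL2}) after the minimizer has been placed on $\cS$.

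The slip is the claim ``$|v_n|_2^2 \to \rho^2 - \rho_0^2$''. Since $u_n$ lies in the \emph{disc} $\cD(\rho_n)$, not on the sphere, Brezis--Lieb only gives $|v_n|_2^2 = |u_n|_2^2 - \rho_0^2 + o(1)$, and nothing forces $|u_n|_2 \to \rho$. Hence ``$\rho_0 < \rho$'' is the wrong dichotomy: one may well have $\rho_0 < \rho$ yet $|v_n|_2 \to 0$, in which case your rescaling factor $\alpha_n = \sqrt{\rho^2-\rho_0^2}/|v_n|_2$ blows up. The correct contradiction hypothesis is that, along a subsequence, $|v_n|_2 \to \tau$ for some $\tau > 0$; then $\rho_0^2 + \tau^2 \le \rho^2$, you rescale onto $\cS(\tau)$, the chain collapses to $J_\eps(u) = c_\eps(\rho_0)$, and Remark~\ref{rem:subadd}(b) together with the monotonicity of $c_\eps$ gives
\[
c_\eps(\rho) \le c_\eps\bigl(\sqrt{\rho_0^2 + \tau^2}\bigr) < c_\eps(\rho_0) + c_\eps(\tau) = c_\eps(\rho).
\]
With this correction the argument is complete. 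Incidentally, your Moser--Trudinger concern for $N=2$ does not arise in the vanishing step, since (g1) and (g3) alone control $G_+$; it is relevant only in the Brezis--Lieb splitting of $G_-^\eps$, which the paper handles the same way by citing \cite{BrezisLieb}.
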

\begin{proof}
Let $\bar\rho>0$ be determined by Lemma \ref{lem:neg} and consider $u_n$ and $\rho_n$ as in the statement. Then $u_n$ is bounded in view of Lemma \ref{lem:coerbb}. If
\[
\lim_n\max_{y\in\R^N}\int_{B_1(y)}|u_n|^2\,dx=0,
\]
then $\lim_n|u_n|_p=0$ for every $2<p<2^*$ due to Lions' lemma \cite[Lemma I.1 in Part 2]{Lions84}, hence, taking into account (g1) and (g3), $\lim_n\int_{\R^N}G_+(u_n)\,dx=0$ and, consequently, $c_\varepsilon(\rho)=\lim_nJ_\varepsilon(u_n)\ge0$, a contradiction with Lemma \ref{lem:neg}. Therefore, there exist $y_n\in\R^N$ and $u\in\cD(\rho)\setminus\{0\}$ such that, up to a subsequence, $u_n(\cdot-y_n)\rightharpoonup u$ in $H^1(\R^N)$ and $u_n(\cdot-y_n)\to u$ a.e. in $\R^N$. Let us denote $v_n(x):=u_n(x-y_n)-u(x)$. In virtue of the Brezis--Lieb lemma \cite[Theorem 2]{BrezisLieb},
\[
\lim_nJ_\varepsilon(u_n)-J_\varepsilon(v_n)=J_\varepsilon(u).
\]
We want to prove that
\[
\lim_n\max_{y\in\R^N}\int_{B_1(y)}|v_n|^2\,dx=0,
\]
which, as before, will yield the statement of the Lemma. If this is not the case, then, as before, there exist $z_n\in\R^N$ and $v\in H^1(\R^N)\setminus\{0\}$ such that, denoting $w_n(x):=v_n(x-z_n)-v(x)$, we have $w_n\rightharpoonup0$ in $H^1(\R^N)$, $w_n\to0$ a.e. in $\R^N$, and
\[
\lim_nJ_\varepsilon(v_n)-J_\varepsilon(w_n)=J_\varepsilon(v).
\]
Observe that, again owing to the Brezis--Lieb lemma,
\[
\lim_n |u_n|_2^2 - |w_n|_2^2 = \lim_n |u_n|_2^2 - |v_n|_2^2 + |v_n|_2^2 - |w_n|_2^2 = |u|_2^2 + |v|_2^2,
\]
whence, denoting $\beta:=|u|_2>0$ and $\gamma:=|v|_2>0$, there holds
\[
\rho^2 - \beta^2 - \gamma^2 \ge \limsup_n |u_n|_2^2 - \beta^2 - \gamma^2 = \limsup_n |w_n|_2^2 =: \delta^2 \ge 0.
\]
If $\delta>0$, then let us set $\tilde{w}_n:=\frac{\delta}{|w_n|_2}w_n\in\cS(\delta)$. Via explicit computations (cf., e.g., \cite[Lemma 2.4]{Shibata_2014}), we have $\lim_nJ_\varepsilon(w_n)-J_\varepsilon(\tilde{w}_n)=0$, hence, together with Lemma \ref{lem:subadd} and since $c_\varepsilon$ is non-increasing,
\begin{equation}\label{eq:del}\begin{split}
c_\varepsilon(\rho) & = \lim_n J_\varepsilon(u_n) = J_\varepsilon(u) + J_\varepsilon(v) + \lim_n J_\varepsilon(w_n) = J_\varepsilon(u) + J_\varepsilon(v) + \lim_n J_\varepsilon(\tilde{w}_n)\\
& \ge c_\varepsilon(\beta) + c_\varepsilon(\gamma) + c_\varepsilon(\delta) \ge c_\varepsilon(\sqrt{\beta^2+\gamma^2+\delta^2}) \ge c_\varepsilon(\rho).
\end{split}\end{equation}
Thus all the inequalities in \eqref{eq:del} are in fact equalities and, in particular, $J_\varepsilon(u) = c_\varepsilon(\beta)$ and $J_\varepsilon(v) = c_\varepsilon(\gamma)$. Therefore Lemma \ref{lem:strict} yields $c_\varepsilon(\beta) + c_\varepsilon(\gamma) + c_\varepsilon(\delta) > c_\varepsilon(\sqrt{\beta^2+\gamma^2+\delta^2})$, which contradicts \eqref{eq:del}.
If $\delta=0$, then $w_n\to0$ in $L^2(\R^N)$, which, together with \eqref{eq:GN}, implies $\lim_n \int_{\R^N} G_+(w_n) \, dx = 0$, whence $\liminf_n J_\varepsilon(w_n) \ge 0$. Then \eqref{eq:del} becomes
\[
c_\varepsilon(\rho) = \lim_n J_\varepsilon(u_n) = J_\varepsilon(u) + J_\varepsilon(v) + \lim_n J_\varepsilon(w_n) \ge c_\varepsilon(\beta) + c_\varepsilon(\gamma) \ge c_\varepsilon(\sqrt{\beta^2+\gamma^2}) = c_\varepsilon(\rho)
\]
and we reach a contradiction as before.
\end{proof}

Let us recall that every solution $(u_\varepsilon,\lambda_\varepsilon)$ to the differential equation in \eqref{eq:pert} satisfies the Poho\v{z}aev identity
\begin{equation}\label{eq:Pohopert}
\int_{\R^N} (N-2) |\nabla u_\varepsilon|^2 + N \lambda_\varepsilon |u_\varepsilon|^2 \, dx = 2N \int_{\R^N} G_+(u_\eps) - G_-^\varepsilon(u_\varepsilon) \, dx.
\end{equation}

\begin{proof}[Proof of Theorem \ref{th:pert_n}]
Let $\bar\rho>0$ be determined by Lemma \ref{lem:neg} and consider a minimizing sequence $u_n\in\cD(\rho)$ for $c_\varepsilon(\rho)$. Then, in virtue of Lemmas \ref{lem:coerbb} and \ref{lem:relcom}, there exists $u_\varepsilon\in\cD(\rho)\setminus \{0\}$ such that, up to subsequences and translations, $u_n\rightharpoonup u_\varepsilon$ in $H^1(\R^N)$ and $u_n\to u_\varepsilon$ in $L^p(\R^N)$ for every $2<p<2^*$ and a.e. in $\R^N$. From (g1) and (g3) we have $\lim_n\int_{\R^N}G_+(u_n)\,dx = \int_{\R^N} G_+(u_\eps) \, dx$, therefore, using Fatou's Lemma, $c_\varepsilon(\rho)\le J_\varepsilon(u_\varepsilon)\le\lim_nJ_\varepsilon(u_n)=c_\varepsilon(\rho)<-\beta$. In particular, there exists $\lambda_\varepsilon\in\R$ such that
\[
-\Delta u_\varepsilon+\lambda_\varepsilon u_\varepsilon=g^\varepsilon(u_\varepsilon) \quad \text{in } \R^N.
\]
Note that $\lambda_\varepsilon=0$ if $u_\varepsilon$ is an interior point of $\cD(\rho)$, i.e., $|u_\varepsilon|_2<\rho$. If $\lambda_\varepsilon\le0$, then \eqref{eq:Pohopert} yields $0>J_\varepsilon(u_\varepsilon)\ge|\nabla u_\varepsilon|_2^2/N\ge0$, a contradiction, therefore $\lambda_\varepsilon>0$ and $u_\varepsilon\in\cS(\rho)$. The last statement follows from \cite[Theorem 1.4]{JL_min} (see also \cite{Maris}).
\end{proof}

\begin{Rem}\label{rem:rcL2}
The proof of Theorem \ref{th:pert_n} shows that every minimizing sequence for $c_\varepsilon(\rho)$ is relatively compact in $H^1(\R^N)$ up to a translation. As a matter of fact, from $\lim_n\int_{\R^N}G_+(u_n)\,dx=\int_{\R^N}G_+(u_\varepsilon)\,dx$ and $\lim_nJ_\varepsilon(u_n)=J_\varepsilon(u_\varepsilon)$ we get $\lim_n|\nabla u_n|_2=|\nabla u_\varepsilon|_2$. Moreover,
\[
\rho=|u_\varepsilon|_2\le\liminf_n|u_n|_2\le\limsup_n|u_n|_2\le\rho,
\]
i.e., $\lim_n|u_n|_2=|u_\varepsilon|_2$. Recalling that $u_n\rightharpoonup u_\varepsilon$ in $H^1(\R^N)$, we obtain $u_n\to u_\varepsilon$ in $H^1(\R^N)$.
\end{Rem}

\section{Solutions to \eqref{eq} and related properties}\label{SecU}
Throughout this section, it is assumed that (g0)--(g4) hold. We consider the family of functions and positive numbers $(u_\varepsilon,\lambda_\varepsilon)_{\varepsilon\in(0,1)}$ given by Theorem \ref{th:pert_n}.

\begin{Lem}\label{lem:bdd}
The quantities $|\nabla u_\varepsilon|_2$ and $\lambda_\varepsilon$ are bounded.
\end{Lem}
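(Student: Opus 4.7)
The plan is to derive both bounds by combining the a~priori estimates from Section~\ref{SecP} with the Poho\v{z}aev identity \eqref{eq:Pohopert}. The key observation is that the coercivity estimate from Lemma~\ref{lem:coerbb} is actually uniform in $\varepsilon$, because its proof only uses the bound $G_+(s)\le C_\delta |s|^2+\delta|s|^{2+4/N}$ (coming from (g1) and (g3)), which does not involve $G_-^\varepsilon$, together with the trivial fact that $\int_{\R^N}G_-^\varepsilon(u)\,dx\ge 0$. Thus for any fixed small $\delta>0$ (say $\delta<(2C_{N,2+4/N}^{2+4/N}\rho^{4/N})^{-1}$), one obtains
\[
c_\varepsilon(\rho)=J_\varepsilon(u_\varepsilon)\ge\left(\frac12-\delta C_{N,2+4/N}^{2+4/N}\rho^{4/N}\right)|\nabla u_\varepsilon|_2^2-C_\delta\rho^2,
\]
with constants independent of $\varepsilon\in(0,1)$. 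Since the same minimizing procedure applied to a single fixed test function shows that $c_\varepsilon(\rho)$ is also bounded from above (e.g.\ by $J_1(u)$ for the $u$ constructed in Lemma~\ref{lem:neg}, or simply by $-\beta$ from above and by the lower bound just displayed), we conclude that $|\nabla u_\varepsilon|_2$ is bounded uniformly in $\varepsilon$.

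For $\lambda_\varepsilon$, I would plug the Poho\v{z}aev identity \eqref{eq:Pohopert} into the formula for $J_\varepsilon(u_\varepsilon)$. Writing $I_\varepsilon:=\int_{\R^N}\bigl(G_+(u_\varepsilon)-G_-^\varepsilon(u_\varepsilon)\bigr)\,dx$, the definition \eqref{eq:actionEPS} gives $I_\varepsilon=\frac12|\nabla u_\varepsilon|_2^2-J_\varepsilon(u_\varepsilon)$, while \eqref{eq:Pohopert} reads $(N-2)|\nabla u_\varepsilon|_2^2+N\lambda_\varepsilon\rho^2=2NI_\varepsilon$. Eliminating $I_\varepsilon$ yields the identity
\[
\lambda_\varepsilon\rho^2=\frac{2}{N}|\nabla u_\varepsilon|_2^2-2J_\varepsilon(u_\varepsilon),
\]
equivalently $J_\varepsilon(u_\varepsilon)=\frac1N|\nabla u_\varepsilon|_2^2-\frac{\lambda_\varepsilon}{2}\rho^2$ (mirroring the computation in Remark~\ref{rem:sign_lam}). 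Since the right-hand side of the identity above involves only $|\nabla u_\varepsilon|_2^2$ and $J_\varepsilon(u_\varepsilon)=c_\varepsilon(\rho)$, both of which are already bounded uniformly in $\varepsilon$, the boundedness of $\lambda_\varepsilon$ follows immediately; combined with $\lambda_\varepsilon>0$ from Theorem~\ref{th:pert_n}, one obtains a uniform estimate from above as well.

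The only step requiring a bit of care is verifying the uniform-in-$\varepsilon$ character of the coercivity estimate, so that the upper bound on $|\nabla u_\varepsilon|_2^2$ does not depend on $\varepsilon$. This is not really an obstacle, but it is the place where one must be explicit: the cutoff $\varphi_\varepsilon$ only appears in the positive term $\int G_-^\varepsilon(u_\varepsilon)\,dx$ of $J_\varepsilon$, which is simply discarded in the lower bound, so all the $\varepsilon$-dependent constants disappear. Everything else is bookkeeping.
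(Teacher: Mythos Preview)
Your argument is correct, and it takes a somewhat different (in fact simpler) route than the paper. The paper bounds $|\nabla u_\varepsilon|_2$ by using the Poho\v{z}aev identity directly: for $N\ge 3$ the positivity of $\lambda_\varepsilon$ gives $\frac{1}{2^*}|\nabla u_\varepsilon|_2^2<\int G_+(u_\varepsilon)\,dx$, which together with the growth bound on $G_+$ and Gagliardo--Nirenberg yields the gradient bound; a separate and slightly more indirect computation is then needed for $N=2$. Your approach instead recycles the coercivity estimate from Lemma~\ref{lem:coerbb}, observing that it is uniform in $\varepsilon$ and that $c_\varepsilon(\rho)<0$ provides the matching upper bound; this handles all $N\ge 2$ at once. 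For $\lambda_\varepsilon$, both arguments ultimately come down to the identity $J_\varepsilon(u_\varepsilon)=\frac1N|\nabla u_\varepsilon|_2^2-\frac{\lambda_\varepsilon}{2}\rho^2$, but the paper's version produces along the way the explicit inequality $\int G_-^\varepsilon(u_\varepsilon)\,dx\le\int G_+(u_\varepsilon)\,dx\le C$ (equations \eqref{eq:lambepsbdd} and \eqref{eq:lambepsbdd2}), which is reused in the proof of Theorem~\ref{th:mainsub}; with your approach that inequality would have to be derived separately there, though it follows in one line from Poho\v{z}aev and $\lambda_\varepsilon>0$.
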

\begin{proof}
From (g0), (g1), and (g3), for every $\delta>0$ there exists $C_\delta>0$ such that for every $s\in\R$
\[
G_+(s)\le C_\delta|s|^2+\delta|s|^{2+4/N}.
\]
Let us first consider the case $N \ge 3$. Since $\lambda_\varepsilon>0$, from \eqref{eq:Pohopert} we obtain
\[
\frac1{2^*} |\nabla u_\varepsilon|_2^2 < \int_{\R^N} G_+(u_\varepsilon) \, dx \le C_\delta |u_\varepsilon|_2^2 + \delta|u_\varepsilon|_{2+4/N}^{2+4/N} \le C_\delta \rho^2 + \delta C_{N,2+4/N}^{2+4/N} \rho^{4/N} |\nabla u_\varepsilon|_2^2.
\]
Taking $\delta < \left(2^* C_{N,2+4/N}^{2+4/N} \rho^{4/N}\right)^{-1}$, we obtain the boundedness of $|\nabla u_\varepsilon|_2$. In addition, this and again \eqref{eq:Pohopert} yield
\begin{equation}\label{eq:lambepsbdd}
0 < \frac{2^*}2 \lambda_\varepsilon \rho^2 < \int_{\R^N} |\nabla u_\varepsilon|^2 + \frac{2^*}2 \lambda_\varepsilon |u_\varepsilon|^2 + 2^* G_-^\varepsilon(u_\varepsilon) \, dx = 2^* \int_{\R^N} G_+(u_\varepsilon) \, dx \le C < \infty
\end{equation}
and $\lambda_\varepsilon$ is bounded as well. Now, let us consider the case $N = 2$. From \eqref{eq:Pohopert} we get
\begin{equation}\label{eq:lambepsbdd2}
\rho^2 \lambda_\varepsilon \le \int_{\R^N} \lambda_\varepsilon |u_\varepsilon|^2 + 2 G_-^\varepsilon(u_\varepsilon) \, dx = 2 \int_{\R^N} G_+(u_\varepsilon) \, dx \le C_\delta \rho^2 + \delta C_{N,2+4/N}^{2+4/N} \rho^{4/N} |\nabla u_\varepsilon|_2^2.
\end{equation}
Moreover, again using \eqref{eq:Pohopert} we obtain
\begin{equation*}
-\beta > J_\varepsilon(u_\varepsilon) = \frac12 \left(|\nabla u_\varepsilon|_2^2 - \rho^2 \lambda_\varepsilon\right),
\end{equation*}
which, together with \eqref{eq:lambepsbdd2}, implies
\[
\frac12 |\nabla u_\varepsilon|_2^2 \le C_\delta \rho^2 - \beta + \delta C_{N,2+4/N}^{2+4/N} \rho^{4/N} |\nabla u_\varepsilon|_2^2.
\]
Taking $\delta < \left(2 C_{N,2+4/N}^{2+4/N} \rho^{4/N}\right)^{-1}$, we obtain the boundedness of $|\nabla u_\varepsilon|_2$, hence that of $\lambda_\varepsilon$ follows from \eqref{eq:lambepsbdd2}.
\end{proof}

%If the case $u_\varepsilon=0$ in Theorem \ref{th:pert} can be ruled out even when $\eta>0$, then Lemma \ref{lem:bdd} work for $\eta>0$ because we can take $\rho$ sufficiently small, i.e., \eqref{eq:etainfsub} needs to be replaced with the stronger condition $2^*C_{N,2_\#}^{2_\#}\eta\rho^{4/N}<1$.

Note that for every $\varepsilon\in(0,1)$ and every $u\in\cD$ there holds $J_\varepsilon(u_\varepsilon)\le J_\varepsilon(u)\le J(u)$, whence $J_\varepsilon(u_\varepsilon)\le \inf_{\cD(\rho)} J =: c(\rho)$.
%Moreover, if $\varepsilon\le1/2$, then $J_\varepsilon(u_\varepsilon)\ge J_{1/2}(u_\varepsilon)\ge J_{1/2}(u_{1/2})$, which in turn yields, together with the Poho\v{z}aev identity and since $\lambda_\varepsilon>0$,
%\begin{equation}\label{eq:G+bddaw}
%J_{1/2}(u_{1/2})\le J_\varepsilon(u_\varepsilon)<\frac2{N-2}\int_{\R^N}G^\varepsilon(u_\varepsilon)\,dx\le\frac2{N-2}\int_{\R^N}G_+(u_\varepsilon)\,dx.
%\end{equation}

\begin{proof}[Proof of Theorem \ref{th:mainsub}]
From Lemma \ref{lem:bdd} there exist $u \in \cD$ and $\lambda \ge 0$ such that, up to a subsequence, $u_\varepsilon \rightharpoonup u$ in $H^1(\R^N)$ and $\lambda_\varepsilon \to \lambda$ as $\varepsilon \to 0^+$. %From \eqref{eq:G+bddaw} and the generalized Lions lemma (for functions in $H^1(\R^N)$), cf. \cite[Lemma 1.6]{MederskiBL2}, $u\ne0$.
Arguing as in \cite[Proof of Theorem 1.1]{MederskiBL2} we obtain that $\varphi_\varepsilon(u_\varepsilon)g_-(u_\varepsilon)v\to g_-(u)v$ a.e. for every $v\in\cC_0^\infty(\R^N)$ and that $g_+(u_\varepsilon)v$ and $\varphi_\varepsilon(u_\varepsilon)g_-(u_\varepsilon)v$ are uniformly integrable (and tight).
We deduce that for every $v\in\cC_0^\infty(\R^N)$
\[
-\lambda\int_{\R^N}uv\,dx\gets J_\varepsilon'(u_\varepsilon)v\to J'(u)v,
\]
i.e.,
\[
-\Delta u+\lambda u=g(u) \quad \text{in } \R^N.
\]
Moreover, \eqref{eq:lambepsbdd} (when $N \ge 3$) or \eqref{eq:lambepsbdd2} (when $N = 2$) yields
\[
\int_{\R^N}G_-^\varepsilon(u_\varepsilon)\,dx\le\int_{\R^N}G_+(u_\varepsilon)\,dx\le C
\]
hence, in view of Fatou's lemma, $G_-(u)\in L^1(\R^N)$. In particular, from \cite[Proposition 3.1]{MederskiBL2}, $(u,\lambda)$ satisfies the Poho\v{z}aev identity
\begin{equation}\label{eq:PohoNoEps}
(N-2) \int_{\R^N} |\nabla u|^2 \, dx = 2N \int_{\R^N} G(u) - \frac12 \lambda |u|^2 \, dx.
\end{equation}
We can assume that $|\nabla u_\varepsilon|_2$ and $\int_{\R^N}G_-^\varepsilon(u_\varepsilon)\,dx$ are convergent. Since $\int_{\R^N}G_+(u_\varepsilon)\,dx\to\int_{\R^N}G_+(u)$ (remember that each $u_\eps$ is radially symmetric), we have $J(u)\le\lim_\varepsilon J_\varepsilon(u_\varepsilon)<0$ from Lemma \ref{lem:neg}; in particular, $u\ne0$. If $\lambda=0$, then from \eqref{eq:PohoNoEps} we have $J(u)=|\nabla u|_2^2/N>0$, therefore $\lambda>0$. We prove that $u_\varepsilon\to u$ in $H^1(\R^N)$. Since $\lambda>0$, from \eqref{eq:PohoNoEps} there follows
\[\begin{split}
\int_{\R^N} (N-2) |\nabla u|^2 + N \lambda|u|^2 + 2N G_-(u) \, dx & \le \lim_\varepsilon \int_{\R^N} (N-2) |\nabla u_\varepsilon|^2 + N \lambda|u_\varepsilon|^2 + 2N G^\varepsilon_-(u_\varepsilon) \, dx\\
& = 2N \lim_\varepsilon \int_{\R^N} G_+(u_\varepsilon) \, dx = 2N \int_{\R^N} G_+(u) \, dx\\
& = \int_{\R^N}(N-2) |\nabla u|^2 + N \lambda|u|^2 + 2N G_-(u) \, dx,
\end{split}\]
which yields $|u_\varepsilon|_2 \to |u|_2$;
%and $\int_{\R^N} G_-^\varepsilon(u_\varepsilon) \, dx \to \int_{\R^N} G_-(u) \, dx$
in particular, $u\in\cS$. Furthermore, in a similar fashion we obtain $J(u) \le \lim_\varepsilon J_\varepsilon(u_\varepsilon) \le c(\rho) \le J(u)$, which shows that $|\nabla u_\varepsilon|_2 \to |\nabla u|_2$ and $J(u) = c(\rho)$. Since \eqref{eq:pert} is translation-invariant, we can assume that every $u_\eps$ is radial, hence so is $u$. Moreover, because $u \ne 0$, there exists $x \in \R^N$ such that $u(x) \ne 0$ and so $u_\varepsilon(x)$ has the same sign as $u(x)$ for every $\varepsilon$ sufficiently small; since every $u_\varepsilon$ has constant sign, it has everywhere the same sign as $u(x)$, hence $u$ has constant sign too. Finally, if there exist $x,y,z \in \R^N$ such that $|x| < |y| < |z|$ and either $u(y) < \min\{u(x),u(z)\}$ or $u(y) > \max\{u(x),u(z)\}$, then arguing as before we obtain a contradiction.
\end{proof}

\begin{Rem}\label{rem:conv}
(i) The Proof of Theorem \ref{th:mainsub} contains the relevant result that $c_\eps(\rho) \to c(\rho)$ as $\eps \to 0^+$.\\
(ii) Unlike the proof of Theorem \ref{th:pert_n}, we cannot use the information $\lambda > 0$ to deduce $u \in \cS$ because we do not know whether $u$ is a critical point of $J|_\cD$.
\end{Rem}

\begin{proof}[Proof of Proposition \ref{pr:gsem}]
It follows from Lemma \ref{lem:neg} and Remark \ref{rem:conv} (i).
\end{proof}

%Let $2<p\le2^*$ ($p>2$ if $N=2$), $m\in\R$, $\mu<0$, and $\alpha>0$ {\color{red} ($m=0$?)}. Consider
%\begin{equation}\label{eq:nonexG}
%G_m(t):=\frac12\left(\alpha\log|t|-\frac\alpha2+m\right)t^2+\frac\mu{p}|t|^p.
%\end{equation}
%We consider nonexistence results for
%\begin{equation}\label{eq:nonex}
%-\Delta u+\lambda u=G_m'(u), \quad (\lambda,u)\in\R\times H^1(\R^N).
%\end{equation}

\begin{proof}[Proof of Theorem \ref{th:relcom}]
Let $u_n$ and $\rho_n$ as in the statement. We prove preliminarily that $u_n$ is relatively compact up to translations in $L^p(\R^N)$, $2 < p < 2^*$. Fix $\eps>0$. Arguing (by contradiction) as in the proof of Lemma \ref{lem:relcom}, we find $\beta,\gamma>0$, $\delta \ge 0$, $u \in \cD(\beta) \setminus \{0\}$, $v \in \cD(\gamma) \setminus \{0\}$, $y_n,z_n \in \R^N$, and $v_n,w_n \in H^1(\R^N)$ bounded, none of which depending on $\eps$, such that $\rho^2 - \beta^2 - \gamma^2 \ge \delta^2$ and $\lim_n J_\eps(u_n) - J_\eps(w_n) = J_\eps(u) + J_\eps(v)$, where $v_n(x) = u_n(x-y_n) - u(x)$ and $w_n(x) = v_n(x-z_n) - v(x)$. If $\delta>0$, then $\tilde{w}_n := \frac{\delta}{|w_n|_2}w_n \in \cS(\delta)$ and, again from \cite[Lemma 2.4]{Shibata_2014},
\begin{equation*}
\begin{split}
c_\eps(\rho) & = \lim_n J_\eps(u_n) = J_\eps(u) + J_\eps(v) + \lim_n J_\eps(\tilde{w}_n) \ge c_\eps(\beta) + c_\eps(\gamma) + c_\eps(\delta)\\
& \ge c_\eps(\sqrt{\beta^2 + \gamma^2 + \delta^2}) \ge c_\eps(\rho).
\end{split}
\end{equation*}
Letting $\eps \to 0^+$, observing that $J_\eps(u) \to J(u)$ and $J_\eps(v) \to J(v)$ from the monotone convergence theorem, and using Remark \ref{rem:conv} (i), we obtain
\[\begin{split}
c(\rho) & \ge J(u) + J(v) + \lim_{\eps \to 0^+} \lim_n J_\eps(\tilde{w}_n) \ge \lim_{\eps \to 0^+} c_\eps(\beta) + \lim_{\eps \to 0^+} c_\eps(\gamma) + \lim_{\eps \to 0^+} c_\eps(\delta)\\
& \ge \lim_{\eps \to 0^+} c_\eps(\sqrt{\beta^2 + \gamma^2 + \delta^2}) \ge c(\rho).
\end{split}\]
In particular, $c_\eps(\beta) \to J(u)$ and $c_\eps(\gamma) \to J(v)$ as $\eps \to 0^+$, therefore
\[
c(\beta) \le J(u) = \lim_{\eps \to 0^+} c_\eps(\beta) \le c(\beta), \quad c(\gamma) \le J(v) = \lim_{\eps \to 0^+} c_\eps(\gamma) \le c(\gamma),
\]
and so, from Lemma \ref{lem:subadd} and Remark \ref{rem:subadd} (a),
\[\begin{split}
\lim_{\eps \to 0^+} c_\eps(\sqrt{\beta^2 + \gamma^2 + \delta^2}) & \le \lim_{\eps \to 0^+} c_\eps(\sqrt{\beta^2 + \gamma^2}) + \lim_{\eps \to 0^+} c_\eps(\delta) \le c(\sqrt{\beta^2 + \gamma^2}) + \lim_{\eps \to 0^+} c_\eps(\delta)\\
& < c(\beta) + c(\gamma) + \lim_{\eps \to 0^+} c_\eps(\delta) = \lim_{\eps \to 0^+} c_\eps(\sqrt{\beta^2 + \gamma^2 + \delta^2}),
\end{split}\]
a contradiction. If $\delta=0$, then, again as in the proof of Lemma \ref{lem:relcom}, $\liminf_n J_\eps(w_n) \ge 0$ and we get
\[
c_\eps(\rho) = \lim_n J_\eps(u_n) = J_\eps(u) + J_\eps(v) + \lim_n J_\eps(w_n) \ge c_\eps(\beta) + c_\eps(\gamma) \ge c_\eps(\sqrt{\beta^2 + \gamma^2}) \ge c_\eps(\rho),
\]
thus we obtain a contradiction arguing in a similar (and simpler) way as before. Having proved that $u_n$ is relatively compact up to translations in $L^p(\R^N)$, $2 < p < 2^*$, now we prove that the same convergence holds in $H^1(\R^N)$. Recalling that $u_n$ is bounded, there exists $u \in \cD(\rho)$ such that, up to translations and subsequences, $u_n \weakto u$ in $H^1(\R^N)$ and $u_n \to u$ in $L^p(\R^N)$, $2 < p < 2^*$, whence
\[
c(\rho) \le J(u) \le \lim_n J(u_n) = c(\rho)
\]
and so $|\nabla u_n|_2^2 \to |\nabla u|_2^2$. Moreover, $J(u) = c(\rho) < 0$, which implies, in particular, that $G(u) \in L^1(\R^N)$. Assume by contradiction that $|u|_2 < \rho$; then, for every $v \in \cC_0^\infty(\R^N)$ and every $t \in \R$ sufficiently small, $|u + tv|_2 < \rho$. Next, we check that $J$ is differentiable at $u$ along every $v \in \cC_0^\infty(\R^N)$. Of course, this only needs to be checked for the functional $\int_{\R^N} G(\cdot) \, dx$. Let $t \ne 0$: from (g0) and (g2), $\frac1t [G(u+tv) - G(u)] = \int_0^1 g(u + stv) \, ds \, v$ and the family $\{\int_0^1 g(u + stv) \, ds \, v\}_t$ is uniformly integrable (and tight), thus the claim holds true. There follows that $J'(u)[v] = 0$ for every $v \in \cC_0^\infty(\R^N)$, i.e., $-\Delta u = g(u)$ in $\R^N$. Then, from \cite[Proposition 3.1]{MederskiBL2}, $u$ satisfies \eqref{eq:PohoNoEps} with $\lambda = 0$, whence $J(u) = |\nabla u|_2^2 / N > 0$, which is impossible.
\end{proof}

\begin{proof}[Proof of Theorem \ref{th:nonex}]
Since $G(s) = \alpha / 2 (\ln s^2 - 1) s^2 + \mu / p |s|^p$, it is clear that {\color{red}(g0)--(g3)} hold. We want to prove that {\color{red}(g4)} is satisfied if and only if $\mu > -\frac{\alpha p}{p - 2} e^{-p/2}$. If $\mu \ge 0$, then clearly {\color{red}(g4)} holds, so let us consider the case $\mu < 0$.
Let $s>0$. Of course, $G(s)>0$ if and only if
\[
\widetilde{G}(s) := \frac\alpha2 \left(\ln s^2 - 1\right) + \frac\mu{p} s^{p-2} > 0.
\]
Since
\[
\widetilde{G}'(s) = \frac\alpha{s} + \mu \frac{p-2}p s^{p-3},
\]
we have 
\[
\max\widetilde{G} = \frac\alpha{2} \left(\ln\left(\frac{\alpha p}{\mu(2-p)}\right)^{2/(p-2)} - 1\right) - \frac\alpha{p-2}
\]
and the claim holds because {\color{red}(g4)} is equivalent to $\max\widetilde{G}>0$. Point (i) is then a consequence of Theorem \ref{th:mainsub}. Concerning point (ii), observe that $\max G = 0$ if and only if $\mu = - \frac{\alpha p}{p - 2} e^{-p/2}$. Since, from \cite[Proposition 3.1]{MederskiBL2}, every solution to \eqref{eq} satisfies the Poho\v{z}aev identity \eqref{eq:PohoNoEps}, the statement easily holds true if $\lambda > 0$ or $N \ge 3$ or $\mu < - \frac{\alpha p}{p - 2} e^{-p/2}$. Now let us consider the case $\lambda = 0$, $N = 2$, and $\mu = - \frac{\alpha p}{p - 2} e^{-p/2}$ and assume by contradiction that such a $u$ exists. Since \eqref{eq:PohoNoEps} reads $\int_{\R^N} G(u) \, dx = 0$, $G \le 0$, and $u$ is continuous (se, e.g., \cite[p. 271]{Struwe}), necessarily $u \equiv 0$ or $u \equiv \pm \bar{s}$, where $\bar{s}$ is the unique $s>0$ such that $G(s) = 0$, which contradicts $|u|_2 = \rho \in (0,\infty)$.
\end{proof}

\section{Multiple solutions}\label{SecM}
\subsection{Functional setting}
Let us recall from \cite{RaoRen} that a function $A \colon \R \to \R$ is called an \emph{\textit{N}-function} if and only if it is nonnegative, even, convex, and satisfies
\[
\lim_{s \to 0} \frac{A(s)}{s} = \lim_{s \to \infty} \frac{A(s)}{s} = \infty.
\]
It is said to satisfy the \emph{$\Delta_2$ condition} globally if and only if there exists $K>0$ such that
\begin{equation*}
A(2s) \le K A(s) \quad \text{for all } s \in \R.
\end{equation*}
It is said to satisfy the \emph{$\nabla_2$ condition} globally if and only if there exist $\ell > 1$ such that
\begin{equation*}
2\ell A(s) \le A(\ell s) \quad \text{for all } s \in \R.
\end{equation*}
Equivalently, $A$ satisfies the $\Delta_2$ (respectively, $\nabla_2$) condition globally if and only if there exists $C>1$ such that
\begin{equation}\label{eq:car}
C A(s) \ge s A'(s) \; \text{(respectively, $s A'(s) \ge C A(s)$)} \quad \text{for all } s \in \R.
\end{equation}
%\begin{equation}\label{eq:del-nab}
%C_2 A(s) \ge s A'(s) \ge C_1 A(s) \quad \text{for all } s \in \R.
%\end{equation}
From now on, $A$ shall denote the same function as in (A). We can define the \textit{Orlicz space} associated with $A$ as
\begin{equation*}
V := \left\{u \in L^1_\textup{loc}(\R^N) : A(u) \in L^1(\R^N)\right\}.
\end{equation*}
%Since $A$ satisfies the $\De_2$ contidion globally -- i.e., the inequality on the left in \eqref{eq:del-nab} -- $V$ is a vector space; in fact, i
If we define the norm
\begin{equation*}
\|u\|_V := \inf\left\{\kappa > 0 : \int_{\R^N} A(u/\kappa) \, dx \le 1\right\},
\end{equation*}
then $(V,\|\cdot\|_V)$ is a reflexive Banach space (cf. \cite[Theorem IV.I.10]{RaoRen}).
%It is reflexive because $A$ satisfies the $\De_2$ and $\nabla_2$ conditions globally.
Moreover, the following holds true.
\begin{Lem}\label{lem:recalls}
(i) Let $u_n,u \in V$. Then $\lim_n \|u_n - u\|_V = 0$ if and only if $\lim_n \int_{\R^N} A(u_n - u) \, dx = 0$.\\
(ii) Let $X \subset V$. Then $X$ is bounded if and only if
$$\left\{\int_{\R^N} A(u) \, dx : u \in X\right\}$$
is bounded.\\
(iii) Let $u_n,u \in V$. If $u_n \to u$ a.e. and $\int_{\R^N} A(u_n) \, dx \to \int_{\R^N} A(u) \, dx$, then $\|u_n - u\|_V \to 0$.
\end{Lem}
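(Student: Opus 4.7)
The plan is to use the standard toolkit of Orlicz spaces under the global $\Delta_{2}$ condition. The three ingredients I would rely on throughout are: convexity together with $A(0)=0$ giving the contraction $A(ts)\le tA(s)$ for $t\in[0,1]$; iterating the $\Delta_{2}$ bound to obtain, for every $\lambda>0$, a constant $C_{\lambda}>0$ with $A(\lambda s)\le C_{\lambda}A(s)$ for all $s$; and the basic Luxemburg-norm equivalence $\|u\|_{V}\le M\Leftrightarrow\int_{\R^{N}}A(u/M)\,dx\le 1$ for $u\in V$ and $M>0$, which I would verify by showing the set $\{\kappa>0:\int A(u/\kappa)\,dx\le 1\}$ is upward closed and closed in $(0,\infty)$ via monotone convergence, so it equals $[\|u\|_{V},\infty)$.

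For (i), I would argue both directions via these comparisons. Assuming $\eps_{n}:=\|u_{n}-u\|_{V}\to 0$ with $\eps_{n}>0$, the Luxemburg equivalence gives $\int A((u_{n}-u)/\eps_{n})\,dx\le 1$; then, once $\eps_{n}<1$, the contraction yields $A(u_{n}-u)=A(\eps_{n}\cdot(u_{n}-u)/\eps_{n})\le\eps_{n}A((u_{n}-u)/\eps_{n})$, whence $\int A(u_{n}-u)\,dx\le\eps_{n}\to 0$. Conversely, if $\int A(u_{n}-u)\,dx\to 0$, for arbitrary $\lambda>0$ I would use the $\Delta_{2}$-type bound to get $\int A(\lambda(u_{n}-u))\,dx\le C_{\lambda}\int A(u_{n}-u)\,dx$, which is eventually $\le 1$, forcing $\|u_{n}-u\|_{V}\le 1/\lambda$; since $\lambda$ was arbitrary, $\|u_{n}-u\|_{V}\to 0$. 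Part (ii) I would handle with the same two tools applied to a single function: from $\|u\|_{V}\le M$ one deduces $\int A(u)\,dx\le\max(1,C_{M})$ by splitting $M\le 1$ (where $A(u)\le A(u/M)$ by monotonicity of $A$ in $|s|$) and $M\ge 1$ (using $\Delta_{2}$); from $\int A(u)\,dx\le C$ one gets $\int A(u/M)\,dx\le C/M\le 1$ for $M\ge\max(1,C)$ via the contraction, so $\|u\|_{V}\le M$.

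The main obstacle, and what I would treat most carefully, is (iii), an Orlicz analogue of the Brezis--Lieb convergence principle. The key pointwise inequality is obtained as follows: letting $K$ denote the $\Delta_{2}$ constant with $A(2s)\le KA(s)$, convexity applied to $u_{n}-u=\tfrac{1}{2}(2u_{n})+\tfrac{1}{2}(-2u)$ together with evenness of $A$ gives
\[
g_{n}:=\frac{K}{2}\bigl(A(u_{n})+A(u)\bigr)-A(u_{n}-u)\ge 0.
\]
The continuity of $A$ with $A(0)=0$ and $u_{n}\to u$ a.e.\ yield $g_{n}\to KA(u)$ a.e., so Fatou's lemma, combined with the hypothesis $\int A(u_{n})\,dx\to\int A(u)\,dx$, gives
\[
K\int_{\R^{N}}A(u)\,dx\le\liminf_{n}\int_{\R^{N}}g_{n}\,dx=K\int_{\R^{N}}A(u)\,dx-\limsup_{n}\int_{\R^{N}}A(u_{n}-u)\,dx.
\]
Since the integrand $A(u_{n}-u)$ is nonnegative, this forces $\int A(u_{n}-u)\,dx\to 0$, and invoking part (i) concludes $\|u_{n}-u\|_{V}\to 0$.
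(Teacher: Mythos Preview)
Your proof is correct. The paper's own proof simply cites references: parts (i) and (ii) are referred to standard results in Rao--Ren, and part (iii) to the Brezis--Lieb lemma (specifically \cite[Theorem 2, Example (b)]{BrezisLieb}, which under the $\Delta_2$ condition yields $\int A(u_n)-\int A(u_n-u)-\int A(u)\to 0$, hence $\int A(u_n-u)\to 0$ given the hypothesis, after which (i) concludes). Your arguments for (i) and (ii) are precisely the standard ones underlying those references. For (iii) your direct Fatou argument with $g_n=\tfrac{K}{2}\bigl(A(u_n)+A(u)\bigr)-A(u_n-u)\ge 0$ is a self-contained shortcut tailored to the given hypothesis $\int A(u_n)\to\int A(u)$; it bypasses the full Brezis--Lieb splitting and is slightly more elementary, but the mechanism (convexity plus $\Delta_2$ to get a nonnegative integrand, then Fatou) is the same one that drives the cited result.
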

\begin{proof}
The first two points follow from \cite[Theorem III.IV.12 and Corollary III.IV.15]{RaoRen} respectively, while the third one is a consequence of the previous two and \cite[Theorem 2, Example (b)]{BrezisLieb}.
\end{proof}

Note that $W = H^1(\R^N) \cap V$. Since $(V,\|\cdot\|_V)$ is a reflexive Banach space, so is $(W,\|\cdot\|_W)$, with
\[
\|u\|_W^2 := |\nabla u|_2^2 + |u|_2^2 + \|u\|_V^2.
\]

We obtain the following variant of Lions' lemma.
\begin{Lem}\label{lem:Lions}
Suppose that $u_n \in W$ is bounded and for some $r>0$ 	
\begin{equation}\label{eq:LionsCond11}
\lim_{n\to\infty}\sup_{y\in \R^N} \int_{B(y,r)} |u_n|^2\,dx=0.
\end{equation}
Then $u_n\to 0$ in $L^p(\R^N)$  for every $p \in [2,2^*)$.
\end{Lem}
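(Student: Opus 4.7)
The plan is to split the statement into two regimes. For $p \in (2, 2^*)$ the assertion is essentially the classical Lions concentration-compactness lemma: since $u_n$ is bounded in $W$ and $W \hookrightarrow H^1(\R^N)$ continuously, $u_n$ is bounded in $H^1(\R^N)$, and the vanishing condition \eqref{eq:LionsCond11} together with \cite[Lemma I.1 in Part 2]{Lions84} immediately yields $u_n \to 0$ in $L^p(\R^N)$. The actual work lies at the endpoint $p = 2$, where the classical result fails and the extra Orlicz integrability contained in $W$ must be brought in.

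For $p = 2$, the key ingredient I would exploit is the super-quadratic behaviour of $A$ near the origin from (A), namely $\lim_{s \to 0} A(s)/s^2 = \infty$. Combined with the boundedness of $u_n$ in $V$ — which, via Lemma \ref{lem:recalls} (ii), is equivalent to $\sup_n \int_{\R^N} A(u_n) \, dx =: C < \infty$ — this prevents any $L^2$-mass from escaping through regions where $|u_n|$ is small. More precisely, given $M > 0$ I would choose $\delta = \delta(M) > 0$ so that $s^2 \le A(s)/M$ whenever $|s| \le \delta$, and then decompose
\[
\int_{\R^N} |u_n|^2 \, dx = \int_{\{|u_n| \le \delta\}} |u_n|^2 \, dx + \int_{\{|u_n| > \delta\}} |u_n|^2 \, dx.
\]
The first summand is controlled by $\frac{1}{M} \int_{\R^N} A(u_n) \, dx \le C/M$. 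For the second, fix any $p_0 \in (2, 2^*)$ and use $|u_n|^2 \le \delta^{2-p_0} |u_n|^{p_0}$ on the set $\{|u_n| > \delta\}$, so that
\[
\int_{\{|u_n| > \delta\}} |u_n|^2 \, dx \le \delta^{2 - p_0} \int_{\R^N} |u_n|^{p_0} \, dx \to 0
\]
by the first step. Passing to the limit in $n$ and then sending $M \to \infty$ gives $|u_n|_2 \to 0$.

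I do not expect a genuine obstacle here: the argument is a one-line truncation that interpolates between the super-quadratic Orlicz control near zero (handling the small-value part, where classical Lions is powerless) and the already-established $L^{p_0}$-vanishing at moderate and large scales. The only conceptual point worth emphasising is that it is precisely the $V$-boundedness supplied by $W$ (and not the $H^1$-boundedness alone) that upgrades the classical Lions lemma down to the $L^2$-endpoint; this is exactly the feature of the Orlicz functional setting that motivates working in $W$ in the first place.
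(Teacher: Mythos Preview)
Your proof is correct and follows essentially the same approach as the paper: first invoke the classical Lions lemma for $p\in(2,2^*)$, then for $p=2$ split according to $\{|u_n|\le\delta\}$ and $\{|u_n|>\delta\}$, using $s^2\le A(s)/M$ on the former (via $\lim_{s\to0}A(s)/s^2=\infty$ and Lemma \ref{lem:recalls} (ii)) and $|s|^2\le\delta^{2-p_0}|s|^{p_0}$ on the latter. The paper's write-up is slightly more compressed (it uses a single parameter $\eps=1/M$ and passes to the $\limsup$ directly), but the argument is identical.
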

\begin{proof} In view of Lions' lemma \cite[Lemma I.1 in Part 2]{Lions84}, $u_n\to 0$ in $L^p(\R^N)$  for every $p \in (2,2^*)$. We prove the convergence in $L^2(\R^N)$.
Take any  $p \in (2,2^*)$ and $\eps>0$. We find $\delta>0$ such that 
\begin{equation*}\begin{split}
|s|^2 & \leq \eps A(s)\quad\hbox{ if }|s|\in [0,\delta],\\
%		\Psi(s)&\leq& c s^{p}\quad\hspace{1.5mm}\hbox{ for }s\in (\delta ,M],\\
|s|^2 & \leq \delta^{2-p} |s|^{p}\quad\hbox{ if }|s|>\delta,
\end{split}\end{equation*}
Hence, we get
$$\limsup_{n\to\infty}\int_{\R^N}|u_n|^2\, dx\leq \eps \limsup_{n\to\infty}\int_{\R^N}A(u_n)\, dx+\delta^{2-p} \limsup_{n\to\infty}\int_{\R^N}|u_n|^p\, dx=\eps \limsup_{n\to\infty}\int_{\R^N}A(u_n)\, dx.$$
In view of Lemma \ref{lem:recalls} (ii), we conclude by letting $\eps\to0$.
\end{proof}

Let $\cO$ be any subgroup $\cO(N)$ such that  $\R^N$ is compatible with $\cO$
(cf. \cite{Lions82}), i.e.,
$\lim_{|y|\to\infty} m(y,r)=\infty$ for some $r>0$,
where, for $y \in \R^N$,
$$m(y,r):=\sup\big\{n\in\N: \hbox{there exist }g_1,\dots,g_n\in\cO\hbox{ such that } B(g_iy,r)\cap B(g_jy,r)=\emptyset\hbox{ for }i\neq j\big\}.$$

In view of \cite{Lions82}, $H^1_{\cO}(\R^N)$ embeds compactly into $L^p(\R^N)$ for $2<p<2^*$. As for the subspace $W_\cO:=W\cap H^1_{\cO}(\R^N)$, we obtain the compact embedding also in $L^2(\R^N)$ .

\begin{Cor}\label{CorLions2}
Suppose that $u_n \in W_{\cO}$ is bounded and $u_n\to 0$ in $L^2_\textup{loc}(\R^N)$. Then  $u_n\to 0$ in $L^p(\R^N)$  for every $p \in [2,2^*)$.
\end{Cor}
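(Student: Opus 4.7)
The plan is to reduce the corollary to Lemma \ref{lem:Lions} by showing that, under the compatibility of $\R^N$ with $\cO$ and the $\cO$-invariance of each $u_n$, the local $L^2$-smallness hypothesis $u_n \to 0$ in $L^2_\textup{loc}(\R^N)$ upgrades to the uniform vanishing condition \eqref{eq:LionsCond11}. Once this is established, Lemma \ref{lem:Lions} gives $u_n \to 0$ in $L^p(\R^N)$ for every $p \in [2,2^*)$ directly.

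To prove the vanishing, I would argue by contradiction with $r>0$ being the radius from the compatibility assumption. Suppose, up to a subsequence, that there exist $y_n \in \R^N$ and $\delta>0$ such that $\int_{B(y_n,r)} |u_n|^2\,dx \ge \delta$. If $(y_n)$ is bounded, then all balls $B(y_n,r)$ lie in some fixed ball $B(0,R)$, which together with $u_n \to 0$ in $L^2_\textup{loc}(\R^N)$ contradicts the lower bound $\delta$. So, passing to a further subsequence, $|y_n| \to \infty$. By compatibility, $m(y_n,r) \to \infty$, hence for every $k \in \N$ there exist, for $n$ large enough, elements $g_{1,n},\dots,g_{k,n} \in \cO$ such that the balls $B(g_{i,n} y_n, r)$ are pairwise disjoint.

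Using the $\cO$-invariance of $u_n$ and the fact that each $g_{i,n}$ is an isometry, $\int_{B(g_{i,n} y_n, r)} |u_n|^2 \,dx = \int_{B(y_n, r)} |u_n|^2 \,dx \ge \delta$, and summing over the $k$ disjoint balls yields $|u_n|_2^2 \ge k\delta$. Letting $k \to \infty$ contradicts the $L^2$-boundedness of $u_n$ (which follows from the $W$-boundedness). Therefore \eqref{eq:LionsCond11} holds, and Lemma \ref{lem:Lions} delivers the conclusion. The only non-routine point is the passage from local $L^2$ smallness to uniform $L^2$ smallness via the group action, but this is precisely the content of the compatibility assumption combined with mass-conservation under isometries.
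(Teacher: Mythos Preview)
Your proposal is correct and follows essentially the same approach as the paper: both argue by contradiction that the vanishing condition \eqref{eq:LionsCond11} must hold, using the $\cO$-invariance of $u_n$ together with the compatibility assumption to rule out $|y_n|\to\infty$ (via mass blow-up over disjoint group-translated balls) and the $L^2_\textup{loc}$ convergence to rule out bounded $(y_n)$, then invoke Lemma~\ref{lem:Lions}. The only cosmetic difference is the order of the case analysis and your explicit use of the radius $r$ from the compatibility hypothesis rather than $r=1$.
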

\begin{proof}
Suppose that
\begin{equation}\label{eq:LionsCond12proof1}
\int_{B(y_n,1)} |u_n|^2\,dx\geq c>0
\end{equation}
for some sequence $y_n \in \R^N$ and a constant $c$.
Observe that in the family $\{B(hy_n,1)\}_{h\in\cO}$ we find an increasing number of disjoint balls provided that $|y_n|\to\infty$. Since $u_n$ is bounded in $L^2(\R^N)$ and invariant with respect to $\cO$, by \eqref{eq:LionsCond12proof1} $(y_n)$ must be bounded. Then for sufficiently large $r$ one obtains
$$\int_{B(0,r)} |u_n|^2\,dx\geq \int_{B(y_n,1)} |u_n|^2\,dx\geq  c>0,$$
and we get a contradiction with the convergence of $u_n$ in $L^2_\textup{loc}(\R^N)$. Therefore by Lemma \ref{lem:Lions} we conclude.
\end{proof}

%In addition, we have what follows.
%
%\begin{Lem}\label{Lem:cpt}
%The embedding $W_r \hookrightarrow L^p(\R^N)$ is compact for every $p \in [2,2^*)$.
%\end{Lem}
%\begin{proof}
%Since $\lim_{s \to 0} A(s) / s^2 = \infty$, this is a consequence of \cite[Lemmas 1 and 2]{Strauss} taking, for every $N \ge 1$, $P_N(s) = s^2$ and $Q_N(s) = A(s) + |s|^q$, where $q \in (p,2^*)$.
%\end{proof}

Finally, we have the following result.
\begin{Prop}\label{prop:variation}
If (A) and (f0)--(f3) hold, then the functional $J|_W \colon W \to \R$ is of class $\cC^1$ and, for every $u \in W \cap \cS$, $J|_{W \cap \cS}'(u) = 0$ if and only if there exists $\la \in \R$ such that $(u,\la)$ is a solution to \eqref{eq}--\eqref{eq:mass}.
\end{Prop}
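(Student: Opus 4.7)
The plan is to write $g = f - a$, equivalently $G = F - A$, so that
\[
J(u) = \frac{1}{2}\int_{\R^N}|\nabla u|^2\,dx + \int_{\R^N} A(u)\,dx - \int_{\R^N} F(u)\,dx
\]
and to check that each of the three summands is of class $\mathcal{C}^1$ on $W$, after which I apply the Lagrange multiplier theorem on the $\mathcal{C}^1$-submanifold $W\cap\cS$. The Dirichlet term is classically $\mathcal{C}^1$ on $H^1(\R^N) \supset W$, so only $\Phi_A(u):=\int A(u)\,dx$ and $\Phi_F(u):=\int F(u)\,dx$ require real work.

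For $\Phi_A$ I rely on assumption (A): since $A$ is an $N$-function satisfying $\Delta_2$ and $\nabla_2$ globally, $V$ is a reflexive Orlicz space and the complementary $N$-function $A^*$ also satisfies $\Delta_2$. The pointwise bound $|a(s)s|\le C A(s)$ from \eqref{eq:car}, together with Young's inequality in the form $|a(s)v|\le A^*(a(s))+A(v)\lesssim A(s)+A(v)$, shows $a(u)\in L^{A^*}(\R^N)=V^*$ whenever $u\in V$, with norm controlled by $\|u\|_V$; the convexity of $s\mapsto a(s)s$ then yields continuity of the Nemytskii operator $u\mapsto a(u)$ from $V$ into $V^*$. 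Standard Orlicz calculus (cf. \cite{RaoRen}) now gives that $\Phi_A$ is Fréchet differentiable with $\Phi_A'(u)[v]=\int a(u)v\,dx$ and that $\Phi_A'$ is continuous. For $\Phi_F$ I use (f1)--(f3): the growth condition (f2) permits a splitting where the $a(s)$-dominated contribution is handled as above through $V$, while the remaining $|s|+|s|^{2^*-1}$ part (or, when $N=2$, the exponentially growing part) is $\mathcal{C}^1$ on $H^1(\R^N)$ by Sobolev embedding (respectively by the Moser--Trudinger inequality combined with the uniform bound on $\|u\|_{H^1}$ on bounded subsets of $W$); uniform integrability of $f(u+tv)v$ in $t$ small, verified via these same estimates, yields continuity of the derivative.

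Having established $J|_W\in\mathcal{C}^1$, set $\Psi(u):=\tfrac12(|u|_2^2-\rho^2)$, which is $\mathcal{C}^1$ on $W$ with $\Psi'(u)[v]=\int uv\,dx$. For $u\in\cS$ one has $\Psi'(u)[u]=\rho^2>0$, so $\Psi'(u)\ne 0$ and $W\cap\cS$ is a codimension-one $\mathcal{C}^1$-submanifold of $W$. By the Lagrange multiplier theorem, $J|_{W\cap\cS}'(u)=0$ if and only if there exists $\la\in\R$ with
\[
\int_{\R^N}\nabla u\cdot\nabla v\,dx + \la\int_{\R^N} uv\,dx - \int_{\R^N} g(u)v\,dx = 0 \quad \text{for every } v\in W.
\]
Restricting to $v\in\mathcal{C}_0^\infty(\R^N)\subset W$ gives exactly the notion of solution adopted in the paper. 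For the converse, I need to extend the identity from $\mathcal{C}_0^\infty(\R^N)$ to all of $W$; since both sides, regarded as linear forms in $v$, are continuous on $W$ (by the $\mathcal{C}^1$ analysis above), it suffices to show that $\mathcal{C}_0^\infty(\R^N)$ is dense in $W$. This I would obtain by a cutoff-plus-mollification argument, combining the classical density in $H^1(\R^N)$ with density in $V$, the latter being a consequence of the $\Delta_2$ condition on $A$. The main obstacle here is precisely this density statement together with the continuity of the Nemytskii operator $u\mapsto a(u)$ from $V$ to $V^*$; both are where the full strength of (A) (both $\Delta_2$ and $\nabla_2$) is used. After those are in place, the rest of the argument is an essentially formal application of the Lagrange multiplier theorem.
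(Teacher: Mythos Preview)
Your proposal is correct and follows essentially the same route as the paper: the key step is the Orlicz-space duality estimate $A^*\bigl(a(s)\bigr)=sa(s)-A(s)\le (C-1)A(s)$ (Young's equality plus the $\Delta_2$ characterization \eqref{eq:car}), which shows $a(u)\in V'$ with norm controlled by $\|u\|_V$ and then feeds into a standard $\cC^1$ argument; the paper simply packages this by citing \cite[Lemma 2.1]{CGMS} for both $\int A(u)\,dx$ and $\int F(u)\,dx$, and declares the Lagrange-multiplier part ``obvious''. Two minor remarks: the convexity of $s\mapsto a(s)s$ is not what gives continuity of the Nemytskii map $u\mapsto a(u)$ from $V$ to $V'$ --- that follows already from the $\Delta_2$ condition on $A^*$ and the bound $A^*(a(s))\lesssim A(s)$ via standard Orlicz theory (the convexity hypothesis in (A) is used elsewhere, in the Palais--Smale argument); and your density step for $\cC_0^\infty(\R^N)$ in $W$, while correct under $\Delta_2$, is a detail the paper does not spell out.
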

\begin{proof}
Let $B \colon \R \to \R$ be the complementary \textit{N}-function of $A$. From \cite[Theorem II.V.8]{RaoRen}, it satisfies the $\De_2$ and $\nabla_2$ conditions globally because $A$ does. Let us recall (cf. \cite[Definition III.IV.2, Corollary III.IV.5, and Corollary IV.II.9]{RaoRen}) that $V'$, the dual space of $V$, is isomorphic to the Orlicz space associated with $B$. From \cite[Theorem I.III.3]{RaoRen},
\[
B\bigl(a(s)\bigr) = sa(s) - A(s) \le (C-1) A(s),
\]
where $C>1$ is the constant given in the characterization of the $\De_2$ condition -- i.e., \eqref{eq:car}. Then, for every $u,v \in V$, we have
\[
\left|\int_{\R^N} a(u) v \, dx\right| \le \int_{\R^N} |a(u)| |v| \, dx \lesssim \|a(u)\|_{V'} \|v\|_V.
\]
Now we can use the same argument as in \cite[Lemma 2.1]{CGMS} to obtain that the functionals $u \mapsto \int_{\R^N} A(u) \, dx$ and $u \mapsto \int_{\R^N} F(u) \, dx$ belong to $\cC^1(W)$. The remaining part is obvious.
\end{proof}

\subsection{Proof of Theorem \ref{th:multip}}
We recall the definitions $F_- := F_+ - F$ and $f_- := F_-'$.

\begin{Lem}\label{lem:coerbb2}
If (A), (f0)--(f3), and \eqref{eq:eta-rho} hold, then $J|_{W \cap \cS}$ is coercive and bounded below.
\end{Lem}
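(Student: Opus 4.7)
My plan is to start from the obvious lower bound $J(u) \ge \tfrac12 |\nabla u|_2^2 + \int_{\R^N} A(u)\,dx - \int_{\R^N} F_+(u)\,dx$, which uses $G = F - A$ together with $F_- \ge 0$, and then to dominate $\int F_+(u)\,dx$ by (i) a small fraction of $\int A(u)\,dx$, (ii) a constant multiple of $|u|_2^2 = \rho^2$, and (iii) a coefficient arbitrarily close to $\eta$ times $\int |u|^{2+4/N}\,dx$, the last of which is absorbed into $\tfrac12 |\nabla u|_2^2$ via \eqref{eq:GN} and \eqref{eq:eta-rho}.

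\textbf{Pointwise estimate of $F_+$.} The key ingredient I would establish is: for every $\delta > 0$ there exists $C_\delta > 0$ such that
\[
F_+(s) \le \delta A(s) + C_\delta s^2 + (\eta + \delta)|s|^{2+4/N} \quad \text{for every } s \in \R.
\]
To prove it, (f1) gives $F_+(s) = o(s^2)$ as $s \to 0$ by integrating the estimate $f_+(t)/t < \delta$ near $0$, while (A) yields $s^2 = o(A(s))$ at the origin; hence there exists $\xi_\delta > 0$ with $F_+(s) \le \delta A(s)$ for $|s| \le \xi_\delta$. Next, \eqref{eq:eta} (a consequence of (f3)) furnishes $R_\delta \ge \xi_\delta$ with $F_+(s) \le (\eta + \delta)|s|^{2+4/N}$ for $|s| \ge R_\delta$. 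On the intermediate compact range $\xi_\delta \le |s| \le R_\delta$, continuity bounds $F_+$ by some $M_\delta$, which I would in turn dominate by $(M_\delta/\xi_\delta^2)\,s^2$, thereby absorbing the leftover contribution into the $C_\delta s^2$ term.

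\textbf{Concluding the bound below and coercivity.} Integrating the pointwise estimate on $\R^N$, using $|u|_2^2 = \rho^2$ for the middle term and \eqref{eq:GN} for $\int |u|^{2+4/N}\,dx$, I arrive at
\[
J(u) \ge \bigl(\tfrac12 - (\eta + \delta)\, C_{N,2+4/N}^{2+4/N}\rho^{4/N}\bigr)\, |\nabla u|_2^2 + (1-\delta)\int_{\R^N} A(u)\,dx - C_\delta \rho^2.
\]
For $\delta$ small enough, \eqref{eq:eta-rho} makes both bracketed coefficients strictly positive, which immediately gives the uniform lower bound. For coercivity along $\cS$, note that $|u|_2^2 = \rho^2$ is constant, so $\|u\|_W \to \infty$ forces $|\nabla u|_2 \to \infty$ or $\|u\|_V \to \infty$; the latter forces $\int_{\R^N} A(u)\,dx \to \infty$ by the contrapositive of Lemma \ref{lem:recalls}(ii), and in either case the right-hand side tends to $+\infty$.

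\textbf{Main obstacle.} The delicate point is obtaining the pointwise bound with the \emph{sharp} coefficient $\eta + \delta$ in front of $|s|^{2+4/N}$, since this is precisely what \eqref{eq:eta-rho} is designed to handle; once one realizes that the intermediate-range error can be lumped into an $s^2$ term, which is harmless because of the $L^2$-constraint on $\cS$, the rest is a routine Gagliardo--Nirenberg absorption.
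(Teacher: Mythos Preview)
Your proof is correct and follows essentially the same route as the paper: bound $F_+(s)$ pointwise by $C_\delta s^2 + (\eta+\delta)|s|^{2+4/N}$, integrate, apply \eqref{eq:GN}, and choose $\delta$ small using \eqref{eq:eta-rho}. The only cosmetic difference is that you bound $F_+$ near the origin by $\delta A(s)$ (using $s^2 = o(A(s))$ from (A)) rather than directly by a multiple of $s^2$, which introduces a harmless extra $(1-\delta)$ factor in front of $\int_{\R^N} A(u)\,dx$; the paper simply uses $F_+(s)\le C_\delta s^2$ near the origin and keeps the full $\int_{\R^N} A(u)\,dx$.
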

\begin{proof}
For every $\delta > 0$, there exists $C_\delta > 0$ such that for every $s \in \R$
\[
F_+(s) \le C_\delta |s|^2 + (\eta + \delta) |s|^{2+4/N}.
\]
For every $u \in W \cap \cS$ there holds
\[\begin{split}
J(u) & \ge \int_{\R^N} \frac12 |\nabla u|^2 + A(u) - F_+(u) \, dx\\
& \ge \int_{\R^N} \frac12 |\nabla u|^2 + A(u) - C_\delta |u|^2 - (\eta + \delta) |u|^{2+4/N} \, dx\\
& \ge \frac12 |\nabla u|_2^2 + \int_{\R^N} A(u) \, dx - C_\delta \rho^2 - (\eta + \delta) C_{N,2+4/N}^{2+4/N} \rho^{4/N} |\nabla u|_2^2.
\end{split}\]
We conclude by Lemma \ref{lem:recalls} (ii) and taking $\delta$ so small that $2 (\eta + \delta) C_{N,2+4/N}^{2+4/N} \rho^{4/N} < 1$.
\end{proof}

\begin{Rem}\label{rem:+-}
Setting $f_\textup{p} := \max\{f,0\}$ and $f_\textup{n} := \max\{-f,0\}$, we have that $f_-(s) = f_\textup{n}(s)$ if $s \ge 0$ and $f_-(s) = -f_\textup{p}(s)$ if $s < 0$. In particular, $f_-(s)s \ge 0$ for every $s \in \R$.
\end{Rem}

\begin{Lem}\label{Lem:PS}
If (A) and (f0)--(f3) hold, then $J|_{W_{\cO}} \cap \cS$ satisfies the Palais--Smale condition.
\end{Lem}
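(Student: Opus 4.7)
The plan is to execute a standard three-step Palais--Smale argument, tailored to the Orlicz--Sobolev setting: first, extract a weak limit using the coercivity from Lemma~\ref{lem:coerbb2} and the compact embedding from Corollary~\ref{CorLions2}; second, recover bounded Lagrange multipliers from the constraint; third, upgrade weak to strong convergence in $W$ by combining the monotonicity of $a$, which follows from the convexity of $A$, with the Orlicz-space estimates used in the proof of Proposition~\ref{prop:variation}.

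Let $(u_n) \subset W_{\cO} \cap \cS$ be a Palais--Smale sequence. By Lemma~\ref{lem:coerbb2}, $|\nabla u_n|_2$ and $\int_{\R^N} A(u_n)\,dx$ are bounded, so by Lemma~\ref{lem:recalls}(ii) the sequence is bounded in $W$. Reflexivity gives, up to a subsequence, $u_n \weakto u$ in $W$ and pointwise a.e., and Corollary~\ref{CorLions2} upgrades this to $u_n \to u$ in $L^p(\R^N)$ for every $p \in [2, 2^*)$; in particular $|u|_2 = \rho$, so $u \in \cS \cap W_{\cO}$. Proposition~\ref{prop:variation} and the Lagrange multiplier rule then supply $\lambda_n \in \R$ with $J'(u_n) + \lambda_n (u_n, \cdot)_{L^2} \to 0$ in $W'$. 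Testing with $u_n$, using the $\Delta_2$ condition to bound $\int a(u_n) u_n\,dx$ by a multiple of $\int A(u_n)\,dx$, and using (f1)--(f3) together with Orlicz duality to bound $\int f(u_n) u_n\,dx$, I would deduce that $\lambda_n$ is bounded, so $\lambda_n \to \lambda \in \R$ along a subsequence.

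For the main compactness step, I would test against $v := u_n - u$. Since $u_n - u \weakto 0$ in $W$ and $J'(u) \in W'$, $J'(u)[u_n - u] \to 0$; and $(u_n, u_n - u)_{L^2} \to 0$ by the strong $L^2$-convergence, so
\begin{equation*}
\int_{\R^N} |\nabla(u_n - u)|^2\,dx + \int_{\R^N} \bigl(a(u_n) - a(u)\bigr)(u_n - u)\,dx = \int_{\R^N} \bigl(f(u_n) - f(u)\bigr)(u_n - u)\,dx + o(1).
\end{equation*}
Both summands on the left are non-negative, the second by the monotonicity of $a$. For the right-hand side I would split $f = f_+ - f_-$: from (f1) and (f3), $|f_+(s)| \le \delta |s| + C_\delta |s|^{1 + 4/N}$ for every $\delta > 0$, so the $f_+$-contribution vanishes by the strong convergence in $L^2 \cap L^{2 + 4/N}$ (noting $2 + 4/N < 2^*$ when $N \ge 2$). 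The $f_-$-contribution is handled via Vitali's theorem, with the a.e.\ convergence $f_-(u_n) \to f_-(u)$ clear and equi-integrability obtained from the Orlicz H\"older estimate $\int a(u_n) |u_n - u|\,dx \le \|a(u_n)\|_{V'} \|u_n - u\|_V$ together with the compact $L^p$-convergence for $p \in [2, 2^*)$; when $N = 2$, the Moser--Trudinger inequality applied to the $H^1$-bounded sequence controls the exponential bound in (f2).

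Having obtained $|\nabla(u_n - u)|_2 \to 0$ and $\int (a(u_n) - a(u))(u_n - u)\,dx \to 0$, I would extract the convergence in $V$ from the convexity chain
\begin{equation*}
0 \le A(u_n) - A(u) - a(u)(u_n - u) \le \bigl(a(u_n) - a(u)\bigr)(u_n - u),
\end{equation*}
which, integrated together with $\int a(u)(u_n - u)\,dx \to 0$ (from $a(u) \in V'$ and $u_n \weakto u$ in $V$), gives $\int A(u_n)\,dx \to \int A(u)\,dx$; Lemma~\ref{lem:recalls}(iii) then yields $u_n \to u$ in $V$ and hence in $W$. I expect the main obstacle to be the equi-integrability argument for the $f_-$-contribution, where the critical or exponential growth permitted by (f2) rules out a direct compact-embedding argument and forces one to combine the Orlicz H\"older estimate with the pointwise convergence and the tightness supplied by Corollary~\ref{CorLions2}.
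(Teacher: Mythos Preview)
Your overall route—testing $J'(u_n)-J'(u)$ against $u_n-u$ and exploiting the monotonicity of $a$—is a legitimate alternative to the paper's, and your final convexity chain for upgrading $\int(a(u_n)-a(u))(u_n-u)\to 0$ to $\|u_n-u\|_V\to 0$ is correct.

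The genuine gap is precisely where you flag it. Your Vitali argument for the $f_-$-contribution is circular: the estimate $\int a(u_n)|u_n-u|\le\|a(u_n)\|_{V'}\|u_n-u\|_V$ gives equi-integrability only if either $B\bigl(a(u_n)\bigr)$ or $A(u_n-u)$ is already uniformly integrable, and the second is essentially what you are trying to prove. Independently, the $|s|^{2^*-1}$ term permitted by (f2) is not controlled by convergence in $L^p$ for $p<2^*$, and the Moser--Trudinger remark for $N=2$ runs into the same obstruction. So as written, the right-hand side of your displayed identity is not shown to vanish.

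The paper sidesteps this entirely by testing the (approximate) equation for $u_n$ with $u_n$ and comparing with the Nehari identity for $u$: since $\int f_+(u_n)u_n\to\int f_+(u)u$ needs only $L^{2+4/N}$-convergence and $\lambda_n|u_n|_2^2\to\lambda|u|_2^2$, one obtains $\int|\nabla u_n|^2+a(u_n)u_n+f_-(u_n)u_n\to\int|\nabla u|^2+a(u)u+f_-(u)u$; each summand being nonnegative (Remark~\ref{rem:+-}) and satisfying Fatou, they converge separately, and the critical growth of $f_-$ never enters. Your framework can also be repaired: move the $f_-$-term to the left. One checks that the cross-terms $\int f_-(u)u_n$ and $\int f_-(u_n)u$ do converge (the fixed factor $u$ supplies the missing equi-integrability at critical growth), so $\int(f_-(u_n)-f_-(u))(u_n-u)=\int f_-(u_n)u_n-\int f_-(u)u+o(1)$, whose $\liminf$ is $\ge 0$ by Fatou. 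Then all three summands on the left of your identity have $\liminf\ge 0$ and sum $o(1)$, forcing each to vanish.
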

\begin{proof}
Let $u_n \in W_{\cO} \cap \cS$ such that $J(u_n)$ is bounded and $J|_{W_{\cO} \cap \cS}'(u_n) \to 0$. From Lemma \ref{lem:coerbb2}, $u_n$ is bounded in $W$, therefore there exists $u \in W_{\cO}$ such that $u_n \weakto u$ in $W_\cO$ up to a subsequence. Then, from Corollary \ref{CorLions2}, $u_n \to u$ in $L^p(\R^N)$ for every $p \in [2,2^*)$; in particular, $u \in \cS$. Up to a second subsequence, we can assume that $u_n \to u$ a.e. in $\R^N$. Additionally, from \cite[Lemma 3]{BerLionsII}, there exist $\lambda_n \in \R$ such that
\begin{equation}\label{eq:lambda}
-\Delta u_n + \lambda_n u_n - g(u_n) u_n \to 0 \quad \text{in } W_\cO',
\end{equation}
where $W_\cO'$ is the dual space of $W_\cO$. Testing \eqref{eq:lambda} with $u_n$, we obtain that $\lambda_n$ is bounded as well, hence there exists $\lambda \in \R$ such that $\lambda_n \to \la$ up to a subsequence, and $(u,\la)$ is a solution to \eqref{eq}.
%From Fatou's lemma, the weak convergence in $H^1(\R^N)$, (f1), and (f4), we have
%\[\begin{split}
%J(u) & = \frac12 \int_{\R^N} |\nabla u|^2 \, dx + \int_{\R^N} A(u) \, dx + \int_{\R^N} F_-(u) \, dx - \int_{\R^N} F_+(u) \, dx\\
%& \le \lim_n \frac12 \int_{\R^N} |\nabla u_n|^2 \, dx + \int_{\R^N} A(u_n) \, dx + \int_{\R^N} F_-(u_n) \, dx - \int_{\R^N} F_+(u_n) \, dx = \lim_n J(u_n) \le 0.
%\end{split}\]
%If $\lambda \le 0$, then from the Poho\v{z}aev identity we obtain $0 \ge J(u) \ge |\nabla u|_2^2/N > 0$, thus $\lambda > 0$.
Finally, from the Nehari identity and the fact that $\la_n \to \la$, we obtain
\[\begin{split}
\int_{\R^N} |\nabla u|^2 + a(u)u + f_-(u)u \, dx & = \int_{\R^N} f_+(u)u - \lambda |u|^2 \, dx = \lim_n \int_{\R^N} f_+(u_n)u_n - \lambda |u_n|^2 \, dx\\
& = \lim_n \int_{\R^N} |\nabla u_n|^2 + a(u_n)u_n + f_-(u_n)u_n \, dx,
\end{split}\]
which, together with Remark \ref{rem:+-}, implies that $|\nabla u_n|_2^2 \to |\nabla u|_2^2$ and $\int_{\R^N} a(u_n)u_n \, dx \to \int_{\R^N} a(u)u \, dx$. It remains to prove that $\|u_n - u\|_V \to 0$. In virtue of Lemma \ref{lem:recalls}(iii), it suffices to prove that $\int_{\R^N} A(u_n) \, dx \to \int_{\R^N} A(u) \, dx$. Additionally, since $A$ satisfies the $\nabla_2$ condition globally, this occurs if the sequence $a(u_n)u_n$ is bounded above by an integrable function, which holds true from (A) and \cite[Example (b)]{BrezisLieb}.
\end{proof}

\begin{Lem}\label{lem:BLmap}
For every $k \ge 1$ there exist $\pi_k \colon \mathbb{S}^{k-1} \to W_{\cO(N)} \cap \cS$ and $\tilde \pi_k \colon \mathbb{S}^{k-1} \to \cX \cap \cS$ odd and continuous.
\end{Lem}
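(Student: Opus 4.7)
The strategy is the classical one: produce $k$ ``bumps'' in the target space with pairwise disjoint supports, rescale each so that its $L^2$-norm equals $\rho$, and then map $t = (t_1,\ldots,t_k) \in \mathbb{S}^{k-1}$ to the linear combination $\sum_i t_i\phi_i$. Oddness and continuity are immediate from linearity, and the disjointness of supports gives
\[
\Bigl|\sum_{i=1}^k t_i \phi_i\Bigr|_2^2 = \sum_{i=1}^k t_i^2 |\phi_i|_2^2 = \rho^2,
\]
so the map lands in $\cS$. Smoothness and compact support make $A(\sum_i t_i\phi_i) \in L^1(\R^N)$ automatic, hence the image lies in $W$.

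For $\pi_k$, pick $k$ pairwise disjoint closed spherical annuli $\Sigma_i := \{x \in \R^N : r_i \le |x| \le R_i\}$ (e.g.\ $r_i = i$, $R_i = i + 1/2$) and nonzero radial bumps $\phi_i \in \cC_0^\infty(\R^N)$ with $\supp\phi_i \subset \Sigma_i$; after rescaling, assume $|\phi_i|_2 = \rho$. Each $\phi_i$ is radial, smooth, and compactly supported, so $\phi_i \in W_{\cO(N)}$, and the recipe above yields the desired $\pi_k$.

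For $\tilde\pi_k$, the bumps must, in addition, be $\cO(M) \times \cO(M) \times \cO(N-2M)$-invariant and antisymmetric under $\tau$. Choose $2k$ pairwise disjoint closed intervals $I_1,J_1,\ldots,I_k,J_k \subset (0,\infty)$, and for each $i$ let $\phi_i \in \cC_0^\infty(\R^N)$ be a nonzero function that is $\cO(M) \times \cO(M) \times \cO(N-2M)$-invariant and supported in $\{x = (x_1,x_2,x_3) \in \R^M \times \R^M \times \R^{N-2M} : |x_1| \in I_i,\ |x_2| \in J_i\}$ (the condition on $x_3$ being vacuous when $N = 2M$). Since the group action preserves each $|x_j|$ and $\phi_i$ is invariant, a short computation shows that $\phi_i \circ \tau$ is also $\cO(M) \times \cO(M) \times \cO(N-2M)$-invariant; hence $\psi_i := \phi_i - \phi_i \circ \tau$ lies in $W_{\cO(M) \times \cO(M) \times \cO(N-2M)}$ and satisfies $\psi_i \circ \tau = -\psi_i$, so $\psi_i \in \cX$. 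Because $\supp(\phi_i \circ \tau) \subset \{|x_1| \in J_i,\ |x_2| \in I_i\}$ and $I_i \cap J_i = \emptyset$, the two pieces of $\psi_i$ have disjoint supports, and the mutual disjointness of all the $I_j$'s and $J_j$'s forces $\supp\psi_i \cap \supp\psi_j = \emptyset$ for $i \ne j$. Rescaling so that $|\psi_i|_2 = \rho$ and putting $\tilde\pi_k(t) := \sum_i t_i \psi_i$ completes the construction.

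No serious obstacle is present; the only bookkeeping worth highlighting is in the second construction, where one must verify both the invariance of $\phi_i \circ \tau$ under the product group (which is what forces $\psi_i \in \cX$) and the mutual disjointness of the full supports $\supp\phi_i \cup \supp(\phi_i \circ \tau)$ across different indices $i$, the latter being guaranteed by the choice of $2k$ mutually disjoint intervals.
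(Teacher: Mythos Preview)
Your argument is correct. For $\pi_k$ it coincides with the paper's: the paper simply invokes the Berestycki--Lions construction (radial bumps with disjoint annular supports), which is exactly what you spell out.

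For $\tilde\pi_k$ your route differs from the paper's. The paper does not build fresh bumps in $\cX$; instead it recycles $\pi_k$ by setting
\[
\tilde\pi_k(\sigma)(x) := \pi_k(\sigma)(x)\,\phi(|x_1|-|x_2|),
\]
where $\phi\in\cC^\infty(\R)$ is odd with $|\phi|\le 1$ and $\phi(t)=1$ for $t\ge 1$ (following Jeanjean--Lu). Oddness of $\phi$ forces the $\tau$-antisymmetry in one stroke, and the $\cO(M)\times\cO(M)\times\cO(N-2M)$-invariance is inherited from the radiality of $\pi_k(\sigma)$ and the fact that $|x_1|-|x_2|$ is invariant. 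This is slicker because oddness and continuity are inherited directly from $\pi_k$, with no need to track $2k$ disjoint intervals; on the other hand, multiplying by $\phi$ destroys the $L^2$-normalization, so an (implicit) renormalization step is required, and one must note that the resulting $L^2$-norm is a continuous, nowhere-vanishing function of $\sigma$. Your construction avoids this last point entirely, since your $\psi_i$ are already normalized and have genuinely disjoint supports, so $|\tilde\pi_k(t)|_2=\rho$ is automatic. Both approaches are standard and equally valid.
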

\begin{proof}
The existence of $\pi_k$ was proved in \cite[Theorem 10, Lemma 8]{BerLionsII} (with the notations therein, note that we can take any number different from $0$ instead of $\zeta$ because we do not need $V(u) \ge 1$). Then, following \cite[Proof of Lemma 3.4]{JeanjeanLu:norm} and \cite[Remark 4.2]{MederskiBL}, we take $\phi \in \cC^\infty(\R)$ odd such that $0 \le \phi \le 1$ and $\phi(t) = 1$ for all $t \ge 1$, and define for $\sigma \in \mathbb{S}^{k-1}$
\[
\tilde{\pi}_k(\sigma)(x) := \pi_k(\sigma)(x) \phi(|x_1| - |x_2|) \quad \text{for all } x = (x_1,x_2,x_3) \in \R^M \times \R^M \times \R^{N-2M}.\qedhere
\]
\end{proof}

We make use of the following abstract theorem, where $\cG$ stands for the Krasnoselsky genus \cite[Chapter 5]{Struwe}.
\begin{Th}\label{Th:JL}
Let $E$ be a Banach space, $H \supset E$ a Hilbert space with scalar product $(\cdot|\cdot)$, $R>0$, $\cM := \left\{u \in E : (u|u) = R\right\}$, and $I \in \cC^1(E)$ even such that $I|_\cM$ is bounded below. For every $k \ge 1$, define
\[
c_k := \inf_{A \in \Gamma_k} \sup_{u \in A} I(u), \quad \Gamma_k := \left\{A \subset \cM : A = -A = \overline{A} \text{ and } \cG(A) \ge k\right\}.
\]
Then, for every $k \ge 1$, $-\infty < c_1 \le \dots \le c_k \le c_{k+1} \le \dots$ and the following holds: if there exist $k \ge 1$ and $h \ge 0$ such that $c_k = \dots = c_{k+h} < \infty$ and $I|_\cM$ satisfies the Palais--Smale condition at the level $c_k$, then
\[
\cG\left(\left\{u \in \cM : I(u) = c_k \text{ and } I|_\cM'(u) = 0\right\}\right) \ge h + 1
\]
(in particular, taking $h=0$, every $c_k$ is a critical value of $I|_\cM$).
\end{Th}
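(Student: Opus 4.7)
\textbf{Proof proposal for Theorem \ref{Th:JL}.} The plan is to follow the classical Lusternik--Schnirelmann scheme for even functionals on symmetric $\cC^1$ submanifolds, combining three ingredients: (i) the elementary monotonicity of the minimax values, (ii) an equivariant deformation lemma on $\cM$, and (iii) the subadditivity and neighborhood properties of the Krasnoselsky genus.

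First, observe that $\cM$ is a symmetric $\cC^1$ submanifold of $E$ because $u \mapsto (u|u)$ is $\cC^1$ with nowhere-vanishing differential on $\cM$; moreover, $\cM$ contains antipodal images of finite-dimensional spheres of every dimension (just pick a linearly independent $k$-tuple in $E$ and normalize), so $\Gamma_k \neq \emptyset$ for every $k$. Since $I|_\cM$ is bounded below, $c_k > -\infty$; since $\Gamma_{k+1} \subset \Gamma_k$ (because $\cG(A) \ge k+1$ forces $\cG(A) \ge k$), the sequence $(c_k)$ is non-decreasing.

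Next I would establish an equivariant deformation lemma on $\cM$. Assume $c := c_k = \dots = c_{k+h} < \infty$ and that the Palais--Smale condition holds at level $c$; then $K_c := \{u \in \cM : I(u) = c,\ I|_\cM'(u) = 0\}$ is compact and symmetric. Following the standard recipe, for any $u \in \cM$ with $I|_\cM'(u) \neq 0$ one splits $I'(u)$ into its tangential and normal parts with respect to the linear projection $v \mapsto v - (u|v)R^{-1} u$, builds a locally Lipschitz pseudo-gradient vector field tangent to $\cM$ (here one works with a pseudo-gradient rather than a gradient because $E$ is only a Banach space), symmetrizes it by the substitution $V(u) \mapsto \tfrac12\bigl(V(u) - V(-u)\bigr)$ using the evenness of $I$, and integrates the resulting ODE. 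Given any symmetric open neighborhood $U \supset K_c$ in $\cM$ and any $\bar\varepsilon > 0$, this produces some $\varepsilon \in (0,\bar\varepsilon)$ and an odd continuous map $\eta \colon \cM \to \cM$ with
\[
\eta\bigl(I^{c+\varepsilon} \setminus U\bigr) \subset I^{c-\varepsilon}, \qquad I^a := \{u \in \cM : I(u) \le a\}.
\]
The main technical obstacle lies here: verifying that the flow remains on $\cM$ and that the equivariance survives after localization by cut-offs; this is where the compatibility between the Banach structure of $E$ and the Hilbert structure defining $\cM$ must be used carefully.

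Finally, I argue the multiplicity claim by contradiction. Suppose $\cG(K_c) \le h$. By the neighborhood property of the genus (which uses compactness of $K_c$), there exists a symmetric open neighborhood $U$ of $K_c$ in $\cM$ with $\cG(\overline U) \le h$. Apply the deformation lemma to obtain $\eta$ and $\varepsilon > 0$. By the definition of $c_{k+h} = c$, choose $A \in \Gamma_{k+h}$ with $\sup_A I \le c + \varepsilon$, and set $B := \overline{A \setminus U}$. Since $A \subset B \cup \overline U$, subadditivity of the genus yields
\[
k+h \le \cG(A) \le \cG(B) + \cG(\overline U) \le \cG(B) + h,
\]
so $\cG(B) \ge k$. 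Then $\eta(B)$ is closed, symmetric, satisfies $\cG(\eta(B)) \ge \cG(B) \ge k$ (because $\eta$ is odd and continuous), and is contained in $I^{c-\varepsilon}$, contradicting $c_k = c$. In particular, taking $h = 0$ shows that each $c_k$ is a critical value of $I|_\cM$.
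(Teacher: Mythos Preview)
Your sketch is the standard Lusternik--Schnirelmann argument and is essentially correct; note, however, that the paper does not actually carry out a proof of Theorem~\ref{Th:JL}: it simply refers to \cite[Theorem~2.1]{JeanjeanLu:norm} and \cite{Rabinowitz:1986}, remarking only that the Palais--Smale condition removes the need for the sign restriction on the $c_k$ appearing in \cite{JeanjeanLu:norm}. What you have written is precisely the argument one finds by unwinding those references, so in spirit you and the paper agree.

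Two minor points worth tightening. First, your claim that $\Gamma_k \ne \emptyset$ for every $k$ tacitly assumes $\dim E = \infty$; the abstract statement does not require this, and the inequalities $-\infty < c_1 \le c_2 \le \dots$ remain valid with the convention $\inf\emptyset = \infty$ (the finiteness hypothesis $c_{k+h} < \infty$ in the multiplicity clause is exactly what rules out empty $\Gamma_{k+h}$). Second, in the last paragraph you assert that $\eta(B)$ is closed; this is only automatic if $\eta$ is a homeomorphism of $\cM$, which it is when built as the time-$1$ map of a complete flow, but your description (``odd continuous map'') does not guarantee it. Either note that the deformation is a homeomorphism, or work with $\overline{\eta(B)}$, which still lies in the closed sublevel set $I^{c-\varepsilon}$ and inherits genus $\ge k$ by monotonicity.
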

\begin{proof}
This theorem is basically (part of) \cite[Theorem 2.1]{JeanjeanLu:norm} (see also \cite{Rabinowitz:1986}), so we omit the proof. The only difference is that the values $c_k$ are critical regardless of their sign, which is a consequence of $I|_\cM$ satisfying the Palais--Smale condition at any level.
\end{proof}

\begin{proof}[Proof of Theorem \ref{th:multip}]
Let us set $E = W_{\cO(N)}$ (respectively, $E = \cX$), $H = L^2(\R^N)$, $R = \rho^2$, $\cM = \cS \cap E$, and $I = J|_E$. From Lemmas \ref{lem:coerbb2} and \ref{Lem:PS}, $I|_\cM = J_{\cS \cap E}$ is bounded below and satisfies the Palais--Smale condition; moreover, from Lemma \ref{lem:BLmap}, $\pi_k(\mathbb{S}^{k-1}) \in \Gamma_k$ (respectively, $\tilde \pi_k(\mathbb{S}^{k-1}) \in \Gamma_k$) for every $k$, so the numbers $c_k$ are finite. Applying Theorem \ref{Th:JL}, we conclude the part about the existence of infinitely many solutions. Concerning the existence of a least-energy solution, we consider a sequence $u_n \in \cS \cap E$ such that $\lim_n J(u_n) = \inf_{\cS \cap E} J$. From Ekeland's variational principle, we can assume that $u_n$ is a Palais--Smale sequence for $J|_{\cS \cap E}$, hence argue as above to obtain a solution $(\bar{u},\bar{\lambda}) \in \R \times (\cS \cap E)$ to \eqref{eq}--\eqref{eq:mass} such that $J(\bar{u}) = \min_{\cS \cap E} J$. The fact that $\min_{\cS \cap W_{\cO(N)}} J = \min_{\cS \cap W} J$ follows from the properties of the Schwartz rearrangement \cite[Chapter 3]{LiebLoss}.
\end{proof}

\noindent{\bf Data availability statement.}
Data sharing not applicable to this article as no datasets were generated or analysed during the current study.

\vspace{.5\baselineskip}

\noindent{\bf Acknowledgements.}
The authors would like to thank the referee for valuable comments concerning the improvement of Theorem 1.6 and for indicating the very recent preprint \cite{ZhangZhang}, where properties of the ground state energy have been studied in the context of strongly sublinear nonlinearities.
%\subsection*{Acknowledgements}
J.M. was %partially
partly supported by the National Science Centre, Poland (Grant No. 2017/26/E/ST1/00817). J.S. was %partially
partly supported by the National Science Centre, Poland (Grant No. 2020/37/N/ST1/00795).\\
J.S. is a member of GNAMPA.

\end{document}